\newtheorem{thm}{Theorem}
\newtheorem{defn}{Definition}
\newtheorem{lem}{Lemma}
\newtheorem{cor}{Corollary}
\newtheorem{prop}{Proposition}
\newcommand{\bb}{\textcolor{gray}{?}}
\newcommand{\lllp}{\vline}
\newcommand{\xodd}{X_{odd}}
\newcommand{\xconst}{X_{s}}
\newcommand{\bx}{\mathbf{x}}
\newcommand{\by}{\mathbf{y}}
\newcommand{\G}{\mathcal{G} }
\newcommand{\gn}[1]{\!^{{}^{{#1}}}\!\!}
\newcommand{\Z}{\mathbb{Z}}
\begin{document}

\title{Cellular automata can really solve the parity problem}

\date{\today}

\author[1,4]{\fnm{Barbara} \sur{Wolnik}}
\author*[2]{\fnm{Anna} \sur{Nenca}}\email{ anna.nenca@ug.edu.pl}
\author[3]{\fnm{Pedro Paulo} \sur{Balbi}}
\author[4]{\fnm{Bernard}  \sur{De Baets}}
\affil[1]{Institute of Mathematics, Faculty of Mathematics, Physics and Informatics, University of Gda\'{n}sk, 80-308 Gda\'{n}sk, Poland}
\affil[2]{Institute of Informatics, Faculty of Mathematics, Physics and Informatics, University of Gda\'{n}sk, 80-308 Gda\'{n}sk, Poland}
\affil[3]{Faculdade de Computa\c{c}\~{a}o e Inform\'{a}tica, Universidade Presbiteriana Mackenzie, 
Rua da Consola\c{c}\~{a}o 896, Consola\c{c}\~{a}o; 01302-907 S\~{a}o Paulo, SP, Brazil}
\affil[4]{KERMIT, Department of Data Analysis and Mathematical Modelling, Faculty of Bioscience Engineering, Ghent University, Coupure links 653, B-9000 Gent, Belgium}

\abstract{
Determining properties of an arbitrary binary sequence is a challenging task if only local processing is allowed. Among these properties, the determination of the parity of 1s by distributed consensus has been a recurring endeavour in the context of automata networks. In its most standard formulation, a one-dimensional cellular automaton rule should process any odd-sized cyclic configuration and lead the lattice to converge to the homogeneous fixed point of 0s if the parity of 1s is even and to the homogeneous fixed point of 1s, otherwise. The only known solution to this problem with a single rule was given by Betel, de Oliveira and Flocchini
(coined BFO rule after the authors' initials). However, three years later the authors of the BFO rule realised that the rule would fail for some specific configuration and proposed a computationally sound fix, but a proof could not be worked out. Here we provide another fix to the BFO rule along with a full proof, therefore reassuring that a single-rule solution to the problem really does exist.
}

\keywords{Parity problem; cellular automata; automata networks; distributed consensus; classification problem.}

 \maketitle

\section{Introduction}

Determining properties of an arbitrary binary sequence is a challenging task if only local processing is allowed. Among these properties, the determination of the parity of 1s has been a recurring endeavour in the context of automata networks. In its most standard formulation, a one-dimensional cellular automaton rule should process any odd-sized cyclic configuration and lead the grid to converge to the homogeneous fixed point of 0s if the parity of 1s is even and to the homogeneous fixed point of 1s, otherwise.
Note that according to the formulation of the problem, it is immediately clear that it makes no sense for even-sized grids, as then the homogeneous configuration of 1s could not be a fixed point.

As such, the solution is achieved by a process of distributed consensus that is reached by all cells in the lattice.   
Many attempts to find a single rule that would solve this so-called \emph{parity problem} have been reported, the most successful one being~\cite{Wolz_Balbi_2008}, where an evolutionary search process allowed to find various extremely good, though not perfect rules. The only known solution to the problem with a single rule was given in~\cite{BFO2013}, coined BFO rule after the authors' alphabetically ordered initials. The BFO rule is a radius-4 one-dimensional cellular automaton, \emph{i.e.}, with its state transitions acting on nine neighbouring cells. It has also been shown that any solution to the classical parity problem cannot be too simple, as it requires a neighbourhood containing at least 7 cells~\cite{No6}.

Alternative formulations to tackle the problem, quite often and surprisingly with the much simpler cellular automata rules of the elementary space -- where neighbourhoods of only 3 cells are considered -- have appeared in the literature. Among them, it is worth mentioning the use of a non-uniform cellular automaton~\cite{Sipper_1998}; the employment of distinct rules at different moments along the time evolution~\cite{Lee_Chu_Xao_2001, Martin_Balbi_improvement_2009}; a rule (the local parity of the elementary space) updating the lattice in a deterministic asynchronous fashion~\cite{AsynchParityECA150_2019}; automata networks operating not on the regular connection pattern of the cells that characterise cellular automata but on a family of connection graphs~\cite{SynchParityECA150Graph_2022, GenSynchParityGraph_2024}; and the use of stochastic asynchronous updating~\cite{Fates_2024}. These   continued efforts testify that the parity problem remains theoretically relevant, since it provides a benchmark view on the possibilities and limitations of computing in a totally local distributed fashion.

Back to the aforementioned BFO solution, three years after its proposal, the authors of~\cite{BFO2013} realised that the rule would not work for the specific initial configuration 0001110101001 of size 13. The problem occurs because three tasks simultaneously perform joint transformations in the configuration that end up leading the configuration back to itself, with a spatial displacement, in a periodic regime.
As soon as the authors realised the issue they proposed a fix that led to a new rule they named BFOm rule. This rule was then computationally probed for all configurations up to size 31 and all the details of the fix were explained in detail in~\cite{BFOm2016}, which was kept unpublished because a proof could not be worked out. Here we provide another fix to the BFO rule along with a full proof, therefore reassuring that (at least) one single-rule solution to the problem really does exist, contrarily to a recent conjecture about this possibility~\cite{Celestine_2023}. Figure~\ref{fig:ts-diagram} illustrates the temporal evolution of the original BFO rule on the faulty initial configuration and the new one with the fixed version of BFO.

In the next section, we provide the necessary background for this paper (definitions and notations), together with results on the original BFO rule that remain valid. In Section~\ref{sec:dyn_switches} we consider the mechanisms underlying the temporal evolution associated with the alleged solution and amend the one that was failing in the original BFO rule. Section~\ref{sec:main_theorem} contains the proof that the new rule indeed solves the parity problem for any arbitrary initial configuration. In the last section we finish with some concluding remarks.

\section{Notation and main definitions}
We consider one-dimensional binary cellular automata on finite grids with periodic boundary conditions. For a positive integer $n$, the $n$-cell grid (or lattice) $\G_n$ refers to the grid of cells numbered by $0,1,\ldots,n-1$, from left to right, \emph{i.e.}, 
\[
\G_n =\{0,1,\dots,n-1\}\,,
\]
and we assume that the last (rightmost) cell $n-1$ is adjacent to the first cell $0$ (the cells are arranged along a circle) hence, all operations on indices are performed modulo~$n$ or, in other words, $\G_n =\Z/n\Z$. Any $n$-tuple $\bx =(x_0,x_1,\ldots,x_{n-1})\in X_n = \{0,1\}^n$ is called \emph{a configuration} (of length $n$) and represents the states of the cells in $\G_n$: the state of cell $i\in \G_n$ in a configuration $\bx$ is denoted by $x_i$. For a given $n$, there are two configurations that are called homogeneous: $(0,0,\ldots,0)$ and $(1,1,\ldots,1)$. 
For the sake of brevity, we will often express configurations in a more compact form and write them without commas. We will also drop the index $n$ and write $\G$ instead of $\G_n$, whenever the context is clear.

To consider the parity problem, we will not limit ourselves to a single grid, but we will consider an entire family of grids at the same time. For this purpose, we consider the following set of configurations:
\[
\xodd=\bigcup_{n=1}^\infty X_{2n-1} = \bigcup_{n=1}^\infty \{0,1\}^{2n-1}\, ,
\]
\emph{i.e.}, the set of all configurations of all possible odd lengths.
The \emph{parity} of a configuration $\bx\in X_n$ is denoted by 
$\mathrm{Par}(\bx)$, \emph{i.e.},
\[
\mathrm{Par}(\bx) = x_0\oplus x_1\oplus x_2\oplus \ldots\oplus x_{n-1}\, .
\]
In other words, $\mathrm{Par}(\bx)$ equals $0$ or $1$, depending on whether there is an even or an odd number of $1$s in the configuration $\bx$.

In each subsequent time step, each cell $i\in\G$ updates its state based on its current state and the states of its $r$ left and $r$ right neighbours, where $r \in \{0,1,2,\ldots\}$. 
Formally speaking, there is a local rule $f:\{0,1\}^{2r+1}\to \{0,1\}$, which generates the global rule $F : \xodd \to \xodd$ in the following way:
\[
F(\bx)_i = f(x_{i-r},x_{i-r+1}, \dots, x_{i+r-1}, x_{i+r})\,,
\]
for any $\bx\in \xodd$.
Further, $F^t(\bx)$ denotes the result of the $t$-th application of~$F$ to the configuration~$\bx$, corresponding to the $t$-th time step.

\begin{defn}
A local rule $f$ is said to solve the parity problem if for each  $\bx\in \xodd$ there exists a time step $t_0\geq 0$ such that for all $t\geq t_0$ the configuration $F^t(\bx)$ is homogeneous and consists either of $1$s if $\mathrm{Par}(\bx)=1$, or consists of $0$s if $\mathrm{Par}(\bx)=0$.
\end{defn}

This paper focuses on the proof that there exists such a local rule that can solve the parity problem. In fact, such a rule has been described in~\cite{BFO2013} and is referred to as the BFO rule. This local rule has radius 4 (\emph{i.e.}, its neighbourhood consists of nine cells) and, in general, the idea underlying the BFO rule design is to reduce the number of blocks of 0s and 1s present in the initial configuration. This is accomplished by propagating blocks of 1s to the right side, two cells per iteration, combined with the simultaneous propagation of 0s to the left. Actually, the BFO rule is much more complicated, as it must include appropriate stopping conditions for these two major trends to coexist. 

However, in its original formulation in~\cite{BFO2013} a slight inaccuracy was present that can easily be corrected; namely, the sentence ``{\it Local shift}: a $(101)$ block is transformed into $(110)$ if there are a 0 on its left and at least two 0s on its right (combination of transitions $T_7$ and $T_8$)" should be in fact ``{\it Local shift}: a $(101)$ block is transformed into $(011)$ if there are at least two 0s on its left and a 0 on its right (combination of transitions $T_7$ and $T_8$)". As will become clear below, $T_7$ and $T_8$ simply refer to compact representations of a subset of the \emph{active state transitions} (ATs) of the rule, which are those associated with a state change in the central cell of the neighbourhood.
\begin{figure}[h]
   \[
    \xymatrix@C=0.2cm@R=-0.03cm{ 
 t=0 &  0001110101001 & 0001110101001 \\
t=1 &  1101010001001 & 1101010010001\\
t=2 & 0100010001111 & 0100100011101 \\
t=3 &  0000011101101 & 1000111010100\\
t=4 & 0000010000001 & 1110101001000\\
t=5 & 1100011100001 & 1010010001110\\
t=6 &  1111011111001 & 0100011101010\\
t=7 & 1101001111111 & 0111010100100\\
t=8 & 0100001111111 & 0101001000111\\
t=9 &  0000001111101 & 0010001110101\\
t=10 &  0000001110100 & 0011101010010\\
t=11 & 0000001010000 & 1010100100011\\
t=12 &  0000000110000 & 1001000111010\\
t=13 &  0000000000000 & 0001110101001
}\]
    \caption{The space-time diagram for the configuration $\bx=0001110101001$ mentioned in the introduction, for the corrected version of the BFO rule on the left and the original BFO rule on the right.}
    \label{fig:ts-diagram}
\end{figure}

Consequently, $T_7$ and $T_8$ had to be properly redefined, which is achieved by simply mirror-reflecting them. Such a corrected version of the BFO rule is the subject of this paper; and although the correction is quite simple, the proof of its correctness ended up leading to a completely new proof technique. 
Since the state transition table of the BFO rule contains 512 values, it is not necessary to specify it in full, but only the active state transitions, as indicated in Table~\ref{tab}, already with the corrected versions of $T_7$ and $T_8$. In terms of its representation in the space of binary cellular automata with radius 4 (neighbourhoods with 9 cells), its rule number is: 12766019579927887748828308653663109277301603915220967933337785052737964273\\3523952685217154493686311891
4126592211732878316055036275868139520320981134\\1541376 (following Wolfram's lexicographic ordering of the neighbourhoods).

\begin{table}[h]
	\centering
	\begin{tabular}{c}
 \xymatrix@R=0,5cm{ 
T_1 :[\bullet111{\bf{0}}0{\bullet}{\bullet}{\bullet}]  & T_5:[{\bullet}{\bullet}{\bullet}0{\textbf{1}}10{\bullet}{\bullet}]    \\
T_2:[1110{\textbf{0}}{\bullet}{\bullet}{\bullet}{\bullet}] & T_6: [{\bullet}{\bullet}01{\textbf{1}}0{\bullet}{\bullet}{\bullet}]    \\
T_3:[{\bullet}001{\textbf{0}}0{\bullet}{\bullet}{\bullet}]  &  T_8: [{\bullet}{\bullet}00{\textbf{1}}010{\bullet}]       \\
T_4:[0010{\textbf{0}}{\bullet}{\bullet}{\bullet}{\bullet}] &  T_9: [{\bullet}{\bullet}{\bullet}1{\textbf{1}}101{\bullet}]   \\
T_7:[{\bullet}001{\textbf{0}}10{\bullet}{\bullet}] & T_{10}:[1110{\textbf{1}}0{\bullet}{\bullet}{\bullet}]  \\
& T_{11}: [1110{\textbf{1}}11{\bullet}{\bullet}]\\
& T_{12}: [{\bullet}{\bullet}11{\textbf{1}}0110]
 }
\end{tabular}
\caption{The state transition table of the BFO rule, from the viewpoint of its active transitions; the central position in the neighbourhood is indicated in boldface and the symbol $\bullet$ refers to \emph{don't care} positions.}\label{tab}
\end{table}

The explicit blocks in the ATs allow to define seven pairs of ATs, each one acting on a block that leads to a corresponding image, as shown in Table~\ref{SevenATpairs} (with both kinds of blocks represented between vertical dashes).

\begin{table}[h]
	\centering
	\begin{tabular}{c}
{\scriptsize

\xymatrix@C=0.3cm@R=0.5cm{ 
T_{1,2} & T_{3,4} & T_{5,6} & T_{7,8} & T_{9,10} & T_{9,11} & T_{9,12} \\ 
|\textcolor{gray}{ 111}\bf{00}| \ar[d] & |\textcolor{gray}{ 001}\bf{00}| \ar[d] & |\textcolor{gray}{ 0}{\bf{11}}\textcolor{gray}{ 0}| \ar[d] & |\textcolor{gray}{ 00}{\bf{10}}\textcolor{gray}{ 10}|\ar[d]  &           |\textcolor{gray}{ 1}{\bf{1}}\textcolor{gray}{ 10}{\bf{1}}\textcolor{gray}{ 0}|\ar[d] & |\textcolor{gray}{ 1}{\bf{1}}\textcolor{gray}{ 10}{\bf{1}}\textcolor{gray}{ 11}|\ar[d]  & |\textcolor{gray}{ 1}{\bf{11}}\textcolor{gray}{ 0110}|\ar[d]   \\
|\bb\bb\bb{\bf11}| & |\bb\bb\bb{\bf11}|          &   |\bb{\bf00}\bb|          & |\bb\bb{\bf01}\bb\bb|     & |\bb{\bf0}\bb\bb{\bf0}\bb| & |\bb{\bf0}\bb\bb{\bf0}\bb\bb| & |\bb{\bf00}\bb\bb\bb\bb| 
}
}

\end{tabular}
\caption{The seven AT pairs.}\label{SevenATpairs}
\end{table}

We refer to each of the blocks at the top as the domain $D$ of the corresponding AT pair, and denote them by $D_{1,2}$, $D_{3,4}$ and so on; analogously, we refer to each of the blocks at the bottom as the image $V$ of the corresponding AT pair, and denote them by $V_{1,2}$, $V_{3,4}$ and so on. With the use of Table~\ref{tab}, we can partially complete the image blocks as shown in Table~\ref{tab-2}.

\begin{table}[h]
	\centering
	\begin{tabular}{c}
{\footnotesize
\xymatrix@C=0.3cm@R=0.5cm{ 
D_{1,2} & D_{3,4} & D_{5,6} & D_{7,8} & D_{9,10} & D_{9,11} & D_{9,12} \\ \vspace{3cm}
|\textcolor{gray}{ 111}{\bf{00}}| \ar[d] & |\textcolor{gray}{ 001}{\bf{00}}| \ar[d] & |\textcolor{gray}{ 0}{\bf{11}}\textcolor{gray}{ 0}| \ar[d] & |\textcolor{gray}{ 00}{\bf{10}}\textcolor{gray}{ 10}|\ar[d]  &           |\textcolor{gray}{ 1}{\bf{1}}\textcolor{gray}{ 10}{\bf{1}}\textcolor{gray}{ 0}|\ar[d] & |\textcolor{gray}{ 1}{\bf{1}}\textcolor{gray}{ 10}{\bf{1}}\textcolor{gray}{ 11}|\ar[d]  & |\textcolor{gray}{ 1}{\bf{11}}\textcolor{gray}{ 0110}|\ar[d]   \\
|\textcolor{gray}{?11}{\bf{11}}| & |\textcolor{gray}{??1}{\bf{11}}|          &   |\textcolor{gray}{?}{\bf{00}}\textcolor{gray}{0}|          & |\textcolor{gray}{??}{\bf{01}}\textcolor{gray}{10}|     & |\textcolor{gray}{?}{\bf{0}}\textcolor{gray}{10}{\bf{0}}\textcolor{gray}{0}| & |\textcolor{gray}{?}{\bf{0}}\textcolor{gray}{10}{\bf{0}}\textcolor{gray}{??}| & |\textcolor{gray}?{\bf{00}}\textcolor{gray}{0000}| \\
V_{1,2} & V_{3,4} & V_{5,6} & V_{7,8} & V_{9,10} & V_{9,11} & V_{9,12}
}}
\end{tabular}
\caption{The domains and partially completed images of all seven AT pairs.}\label{tab-2}
\end{table}

Generally, in place of every question mark in $V$ the same value as above should appear, unless some domains overlap. Since by convention we view each configuration from left to right, then we say that, for example, in the configuration $\bx=111001010$ the domain $D_{1,2}$ is overlapped by the domain $D_{7,8}$ (or that $D_{7,8}$ overlaps $D_{1,2}$).

In~\cite{BFO2013} it has been shown that the following theorems hold (the correction of the BFO has no influence on these results).

\begin{thm}\label{thm:BFO1}
    The BFO rule preserves the parity of the configuration.
\end{thm}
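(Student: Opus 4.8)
The plan is to reduce the claim to an even-count statement about active transitions. First I would observe that
\[
\mathrm{Par}(F(\bx))\oplus\mathrm{Par}(\bx)=\bigoplus_{i\in\G}\bigl(F(\bx)_i\oplus x_i\bigr),
\]
and that each term $F(\bx)_i\oplus x_i$ equals $1$ exactly when cell $i$ changes its state. Thus $\mathrm{Par}(F(\bx))\oplus\mathrm{Par}(\bx)$ is simply the number of flipping cells taken modulo $2$, and the parity is preserved precisely when that number is even. By the definition of an active transition, a cell flips if and only if one of $T_1,\dots,T_{12}$ from Table~\ref{tab} fires at it. Hence it suffices to prove that, for every $\bx\in\xodd$, the number of positions at which some active transition fires is even.

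The key step is to show that active transitions never fire in isolation, but always in the matched pairs recorded in Tables~\ref{SevenATpairs} and~\ref{tab-2}. Concretely, for each pair I would verify a biconditional, e.g.\ ``$T_1$ fires at cell $j$ if and only if $T_2$ fires at cell $j+1$'', and likewise $T_3\leftrightarrow T_4$, $T_5\leftrightarrow T_6$ and $T_7\leftrightarrow T_8$ at suitably shifted positions. Each equivalence is a finite check: the nine-cell neighbourhood triggering one transition overlaps that of its partner in enough positions to force the partner's pattern, and conversely, independently of the \emph{don't care} entries. Since the two transitions of a pair act on two distinct central cells, every firing pair accounts for exactly two flips, matching the two boldface changes displayed in Table~\ref{tab-2}.

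The transition $T_9$ needs separate care, since it belongs to the three pairs $T_{9,10}$, $T_{9,11}$ and $T_{9,12}$. Here I would show that whenever $T_9$ fires at a cell $j$, forcing the local pattern $11101$ around $j$, exactly one of $T_{10},T_{11},T_{12}$ fires, the choice being dictated by the two cells immediately following this pattern: the first of them being $0$ selects $T_{10}$, while the two reading $11$ or $10$ select $T_{11}$ or $T_{12}$ respectively. Conversely, each firing of $T_{10}$, $T_{11}$ or $T_{12}$ forces $T_9$ to fire at the determined position. Together with the mutual exclusivity of the corresponding domains, this makes the partner of every firing transition unique.

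These equivalences assemble into a perfect matching on the set of firing positions: every firing transition has exactly one partner, the matching is symmetric, and no position is claimed twice (the transitions $T_2,\dots,T_8,T_{10},T_{11},T_{12}$ each occur in a single pair, while $T_9$ is disambiguated as above). Consequently the firing positions split into disjoint pairs, so their total number is even and the parity is preserved. I expect the main obstacle to be exactly the well-definedness of this global matching: one must confirm that the twelve neighbourhood patterns, read through their \emph{don't care} completions, give mutually consistent and non-conflicting pairings --- most delicately for the shared transition $T_9$ and wherever two domains overlap --- which amounts to a careful but entirely finite case analysis.
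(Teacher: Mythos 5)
Your proposal is correct and is essentially the intended argument: the paper itself does not prove Theorem~\ref{thm:BFO1} (it imports it from~\cite{BFO2013}), but the matched firing of active transitions that you establish --- $T_1$ with $T_2$, $T_3$ with $T_4$, $T_5$ with $T_6$, $T_7$ with $T_8$, and $T_9$ with exactly one of $T_{10}$, $T_{11}$, $T_{12}$ selected by the two cells following the block $11101$ --- is precisely the structure the paper encodes as the seven AT pairs of Table~\ref{SevenATpairs}. Both of your checkable ingredients hold (the partner biconditionals at the shifted positions, and the fact that no two of the twelve patterns can fire at the same cell, which makes the matching a fixed-point-free involution on the set of flipping cells), so the number of state changes per step is even and parity is preserved.
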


\begin{thm}\label{thm:BFO2}
    The only fixed points of the BFO rule are the homogeneous configurations.
\end{thm}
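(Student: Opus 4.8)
The plan is to recast fixedness as a purely local avoidance condition. Since $F(\bx)=\bx$ means that no cell ever changes state, and the active transitions are by definition exactly those that flip the central cell, a configuration $\bx\in\xodd$ is a fixed point if and only if no AT from Table~\ref{tab} is applicable at any position. Reading off only the constrained (non-$\bullet$) cells of each AT, this is equivalent to requiring that, viewed cyclically, $\bx$ contain none of the words $11100$ (the domain shared by $T_1,T_2$), $00100$ ($T_3,T_4$), $0110$ ($T_5,T_6$), $001010$ ($T_7,T_8$), $11101$ ($T_9$), together with the longer words $111010$, $1110111$, $1110110$ coming from $T_{10},T_{11},T_{12}$. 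From here the argument becomes combinatorial: I would assume $\bx$ is non-homogeneous and derive a contradiction.

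Being non-homogeneous and cyclic, $\bx$ decomposes into maximal, alternating blocks of $1$s and $0$s. First I would pin down the $1$-blocks. A $1$-block of length $\ge 3$ ends in $111$, and whatever follows is a $0$-block: if that block has length $\ge 2$ the word $11100$ appears, while if it has length $1$ the next cell is a $1$ and the word $11101$ appears. Both are forbidden, so no $1$-block has length $\ge 3$. A $1$-block of length exactly $2$ is flanked by $0$s by maximality, producing $0110$, also forbidden. Hence every $1$-block has length exactly $1$, i.e. all $1$s are isolated.

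Knowing this, $\bx$ reads cyclically as $0^{b_1}1\,0^{b_2}1\,0^{b_3}1\cdots$ with each $b_j\ge 1$, and I would next eliminate long $0$-blocks. Suppose $b_j\ge 2$ for some $j$. The isolated $1$ immediately after that block is then preceded by $00$; if the subsequent $0$-block has length $\ge 2$ we read $00100$, and if it has length $1$ we read $00\,1\,0\,1\,0 = 001010$, the final $0$ being the first cell of the next $0$-block. Both words are forbidden, so $b_j\ge 2$ is impossible and every $0$-block also has length $1$. But then $\bx$ is the fully alternating word, which can close up on a cycle only in even length, contradicting $\bx\in\xodd$. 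Therefore no non-homogeneous fixed point exists. That the two homogeneous configurations are in fact fixed is immediate, since every AT domain contains at least one $0$ and at least one $1$, so neither the all-$0$s nor the all-$1$s configuration contains any AT domain.

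The step I expect to be the main obstacle is not any single deduction but guaranteeing completeness and correctness of the reduction: one must extract the exact minimal forbidden word from each AT with the right left-to-right orientation and correct treatment of the $\bullet$ positions, and then verify the cyclic boundary cases for very short configurations (a single isolated $1$, or the regimes with only one or two $0$-blocks) where the wrap-around supplies the cells completing a forbidden word. Once the forbidden-word list is confirmed, the block-length arguments are short and self-contained, and the final parity/length contradiction is what makes the oddness hypothesis of $\xodd$ essential.
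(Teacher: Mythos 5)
Your proof is correct, but note that the paper itself does not prove this statement at all: Theorem~\ref{thm:BFO2} is imported from~\cite{BFO2013} (where, as the concluding remarks of the paper mention, the analysis was built on a form of de Bruijn graph), so there is no in-paper argument to compare against. Your route is a clean, self-contained alternative: the reduction of fixedness to cyclic avoidance of the words $11100$, $00100$, $0110$, $001010$, $11101$ (the $T_{10}$--$T_{12}$ words are redundant, being extensions of $11101$) is a faithful reading of Table~\ref{tab}, and the three block-length eliminations plus the final parity contradiction are each forced by exactly one forbidden word, so the case analysis is complete. The wrap-around worry you flag at the end is not actually an obstacle: since the local rule is applied with indices taken modulo $n$, an AT fires at cell $i$ precisely when its constrained positions match the cells $x_{i-4},\dots,x_{i+4}$ read cyclically, so any cyclic occurrence of a forbidden word (even one re-using cells, as in $110$ or $001$) triggers a state change; your block decomposition, being itself cyclic, covers these short configurations with no extra cases. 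Compared with the graph-theoretic argument of~\cite{BFO2013}, your approach buys elementary verifiability at the cost of being tied to this specific transition table, and it makes transparent exactly where oddness of the length enters, namely only in ruling out the alternating configuration.
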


\noindent
The latter theorem means that if $\bx\in\xodd$ is not homogeneous, then at least one AT pair will act during the next update of $\bx$.

\bigskip

To analyse the dynamics of the BFO rule, we define `box’ and `switch’ patterns that quantify the non-homogeneity of a configuration.

\begin{defn}
    Let $\bx\in\xodd$. We say that $x_ix_{i+1}$ is a box if $x_ix_{i+1}=01$, $x_{i-1}=1$ and $x_{i+2}x_{i+3}=00$.
\end{defn}

\noindent In other words, a box is the pattern `01' preceded by 1 and followed by 00. 

In a given configuration there might be one or more boxes, but there might also be no box at all. Below we give some sample configurations with all boxes (if they appear) marked with gray rectangles:
\[
1110101\fcolorbox{gray}{white}{01}000111,\quad 111\fcolorbox{gray}{white}{01}001\fcolorbox{gray}{white}{01}00111,\quad 111011100001111.
\]

\begin{defn}\label{def:switch}
Let $\bx\in\xodd$. We say that $\bx$ has a b-switch (block switch) at position $i$ if $x_{i+1}x_{i+2}$ is a box. We say that $\bx$ has an r-switch (regular switch) at position $i$ if $x_i\neq x_{i+1}$ and both $x_i$ and $x_{i+1}$ do not belong to any box. By $s(\bx)$ we denote the number of all switches in~$\bx$.
\end{defn}

 In other words, to express how different a given configuration is from a homogeneous one, we will use a slightly modified measure compared to the one introduced in~\cite{BFO2013}. More explicitly, instead of counting all places where 0 changes to 1 (or vice versa) in the entire configuration, we propose to first extract some parts of the configuration (all boxes) and only then count such changes in the retained parts.
The following are sample configurations with all switches numbered, and the boxes identified by gray rectangles. Note that if a given switch is at position $i$, its identifying number is placed in between the cells $i$ and $i+1$, at the top:
\[
111\gn{1}0\gn{2}1\gn{3}0\gn{4}1\gn{5}\fcolorbox{gray}{white}{01}000\gn{6}111,\quad 111\gn{1}\fcolorbox{gray}{white}{01}00\gn{2}1\gn{3}\fcolorbox{gray}{white}{01}00\gn{4}111,\quad 111\gn{1}0\gn{2}111\gn{3}0000\gn{4}1111.
\]

The following fact will turn out to be very important.
\begin{prop}
\label{prop:hom=0}
A configuration $\bx\in\xodd$ is a homogeneous configuration if and only if $s(\bx)=0$.   
\end{prop}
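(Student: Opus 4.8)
The plan is to prove both implications directly, observing that the forward direction is essentially immediate while the reverse direction is most cleanly handled by contraposition. Throughout I would exploit the cyclic structure of $\G_n$: a configuration $\bx$ is homogeneous precisely when all its cells carry the same state, which on a cycle is equivalent to the absence of any adjacent pair $x_i,x_{i+1}$ with $x_i\neq x_{i+1}$.

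For the forward implication, suppose $\bx$ is homogeneous. Then $\bx$ contains no occurrence of the pattern $01$, so by definition it contains no box; consequently there can be no b-switch. Moreover, since $x_i=x_{i+1}$ for every $i$, there is no position admitting an r-switch either. Hence $s(\bx)=0$.

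For the reverse implication I would argue the contrapositive: assuming $\bx$ is not homogeneous, I would show $s(\bx)\geq 1$. Non-homogeneity together with the cyclic arrangement guarantees at least one index $i$ with $x_i\neq x_{i+1}$. I would then split into two cases according to whether $\bx$ contains a box. If $\bx$ contains at least one box, say $x_jx_{j+1}$, then by Definition~\ref{def:switch} there is a b-switch at position $j-1$, so $s(\bx)\geq 1$. If instead $\bx$ contains no box, then no cell belongs to any box, so the pair $x_i,x_{i+1}$ witnessing $x_i\neq x_{i+1}$ is, vacuously, an r-switch; again $s(\bx)\geq 1$.

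The only point requiring care --- and it is a mild one --- is that a transition $x_i\neq x_{i+1}$ need not itself be counted as a switch when one of the two cells is absorbed into a box. My case split sidesteps this entirely: rather than trying to track which transitions survive as r-switches, I simply note that the mere existence of a box already produces a b-switch, while in the box-free case every transition is automatically an r-switch. Thus in either case a non-homogeneous configuration forces $s(\bx)\geq 1$, which completes the contrapositive and hence the proposition.
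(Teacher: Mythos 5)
Your proposal is correct and takes essentially the same approach as the paper: an immediate forward direction plus a short case analysis on boxes for the converse. The only (cosmetic) difference is that you split on whether $\bx$ contains \emph{any} box, whereas the paper splits on whether the particular pair $x_ix_{i+1}=01$ it exhibits is itself a box; your global split neatly avoids the small implicit check the paper relies on, namely that a $0$ can only be the first cell and a $1$ only the second cell of a box, so a non-box pair $01$ cannot have either of its cells absorbed into some other box.
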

\begin{proof}
    Let $\bx = x_0x_1\ldots x_{n-1}\in \xodd$. Of course, if $\bx$ is a homogeneous configuration, then $s(\bx)=0$. Now, suppose that neither $\bx = 000\ldots00$ nor $\bx = 111\ldots 11$. This means that there exists at least one $i\in\{0,1,\ldots,n-1\}$ such that $x_i=0$ and $x_{i+1}=1$. If $x_ix_{i+1}=01$ is a box, then there is a b-switch at position $n-1$, and so $s(\bx)\geq 1$. If $x_ix_{i+1}=01$ is not a box, then, according to Definition~\ref{def:switch}, there is an r-switch at position $i$, and therefore also in this case $s(\bx)\geq 1$.
\end{proof}

Figure~\ref{fig:example} presents a sample configuration $\bx$ in which all switches have been numbered. This allows us to track how they are affected by the action of the BFO rule.
One should note that, as time evolves, the switches from the initial configuration subsequently move to the right or to the left, or simply disappear; but no new ones appear.
As a result, after 27 updates, all switches disappear and $F^{27}(\bx)$ is a homogeneous configuration.
This is reflected in the sequence $\big(s(F^t(\bx))\big)_{t=0}^\infty$, which is monotonically decreasing, reaching zero for $t\geq 27$. As will become apparent later in the paper, the same scenario unfolds for any configuration $\bx\in \xodd$.

\begin{figure}
   \[
    \xymatrix@C=0.2cm@R=-0.03cm{ 
 t=0 &   00000\gn{1}1\gn{2}0\gn{3}111\gn{4}00\gn{5}1\gn{6}0\gn{7}11111\gn{8} & s(\bx)=8 \\
t=1 & 11\gn{8}000\gn{1}1\gn{2}0\gn{3}111111\gn{6}0\gn{7}11111 & s(F(\bx))=6 \\
t=2 & 1111\gn{8}0\gn{1}1\gn{2}0\gn{3}1111\gn{6}\fcolorbox{gray}{white}{01}00\gn{7}1111 & s(F^2(\bx))=6 \\
t=3 & 11\gn{8}\fcolorbox{gray}{white}{01}000\gn{3}11\gn{6}\fcolorbox{gray}{white}{01}0000\gn{7}1111 & s(F^3(\bx))=4 \\
t=4 & \gn{8}\fcolorbox{gray}{white}{01}00000000\gn{3}1\gn{6}0000\gn{7}1111 & s(F^4(\bx))=4 \\
t=5 & 0000000000\gn{3}111\gn{6}00\gn{7}11\gn{8}\fcolorbox{gray}{white}{01} & s(F^5(\bx))=4 \\
t=6 & 0000000000\gn{3}11111\gn{6}000\gn{7}1\gn{8} & s(F^6(\bx))=4 \\
t=7 & 11\gn{8}00000000\gn{3}1111111\gn{6}0\gn{7}1 & s(F^7(\bx))=4 \\
t=8 & 1111\gn{8}000000\gn{3}11111\gn{6}\fcolorbox{gray}{white}{01}00\gn{7} & s(F^8(\bx))=4 \\
t=9 & 111111\gn{8}0000\gn{3}111\gn{6}\fcolorbox{gray}{white}{01}0000\gn{7} & s(F^9(\bx))=4 \\
t=10 & 11111111\gn{8}00\gn{3}1\gn{6}\fcolorbox{gray}{white}{01}000000\gn{7} & s(F^{10}(\bx))=4 \\
t=11 & 1111111111\gn{8}0\gn{3}11\gn{6}000000\gn{7} & s(F^{11}(\bx))=4 \\
t=12 & 11111111\gn{8}00000000000\gn{7} & s(F^{12}(\bx))=2 \\
t=13 & 1111111111\gn{8}000000000\gn{7} & s(F^{13}(\bx))=2 \\
t=14 & 111111111111\gn{8}0000000\gn{7} & s(F^{14}(\bx))=2 \\
t=15 & 11111111111111\gn{8}00000\gn{7} & s(F^{15}(\bx))=2 \\
t=16 & 1111111111111111\gn{8}000\gn{7} & s(F^{16}(\bx))=2 \\
t=17 & 111111111111111111\gn{8}0\gn{7} & s(F^{17}(\bx))=2 \\
t=18 & 0\gn{7}111111111111111\gn{8}\fcolorbox{gray}{white}{01}0 & s(F^{18}(\bx))=2 \\
t=19 & 0\gn{7}1111111111111\gn{8}\fcolorbox{gray}{white}{01}000 & s(F^{19}(\bx))=2 \\
t=20 & 0\gn{7}11111111111\gn{8}\fcolorbox{gray}{white}{01}00000 & s(F^{20}(\bx))=2 \\
t=21 & 0\gn{7}111111111\gn{8}\fcolorbox{gray}{white}{01}0000000 & s(F^{21}(\bx))=2 \\
t=22 & 0\gn{7}1111111\gn{8}\fcolorbox{gray}{white}{01}000000000 & s(F^{22}(\bx))=2 \\
t=23 & 0\gn{7}11111\gn{8}\fcolorbox{gray}{white}{01}00000000000 & s(F^{23}(\bx))=2 \\
t=24 & 0\gn{7}111\gn{8}\fcolorbox{gray}{white}{01}0000000000000 & s(F^{24}(\bx))=2 \\
t=25 & 0\gn{7}1\gn{8}\fcolorbox{gray}{white}{01}000000000000000 & s(F^{25}(\bx))=2 \\
t=26 & 00\gn{7}11\gn{8}000000000000000 & s(F^{26}(\bx))=2 \\
t=27 & 0000000000000000000 & s(F^{27}(\bx))=0 
}\]
    \caption{The space-time diagram for the sample configuration $\bx=0000010111001011111$ with all switches numbered.}
    \label{fig:example}
\end{figure}

\section{The dynamics of switches}\label{sec:dyn_switches}
In this section we study how every particular AT pair affects the total number of switches. Our goal is to show that the number of switches is non-increasing, \emph{i.e.}, that for any configuration $\bx\in\xodd$ the sequence $\big(s(F^t(\bx))\big)_{t=0}^\infty$ monotonically decreases. 

In the following subsections, we compare the switches in every domain $D$ and corresponding image $V$ in detail to justify that every switch present in $V$ is actually a switch from $D$, which may have moved to the left or to the right (in order to make it easier to track the movement of the switches, they are identified with markers from the set of symbols \{$*$, $\circ$, $\prime$, $\diamond$, $\flat$\}, also placed in between the corresponding cells, at the top).
This task is complicated by the fact that sometimes we cannot resolve what type of switch is in $D$, or some domains may overlap, all of which require consideration of cases. 

In what follows, we assume that $\bx\in\xodd$ and $\by=F(\bx)$, and that all values given in $V$ are calculated based on Table~\ref{tab}. Furthermore, it is worth keeping in mind that there is no need to be concerned about short configurations, since if $\bx$ is a short configuration, then we can consider a long configuration $\bx^k=\bx\bx\ldots\bx$, which is a concatenation of as many $\bx$'s as necessary. So, due to periodic boundary conditions, for each time step $t$, we have $F^t(\bx^k)=(F^t(\bx))^k$; hence, if we know the dynamics of the global rule $F$ for long configurations, we also know it for short ones.

\subsection{The pairs $T_{1,2}$ and $T_{3,4}$}
The pair $T_{1,2}$ acts always when the configuration $\bx$ contains $D_{1,2}=|11100|$. Without loss of generality, we can assume that $|x_1x_{2}x_{3}x_{4}x_{5}|=|11100|$. Note that the pair $T_{1,2}$ changes $x_{4}x_{5}=00$ to $y_{4}y_{5}=11$, thus it can only affect switches at positions $3$ and $5$; therefore, we do not need to care about the question mark at position $y_1$.
If $y_6=0$, then $V_{1,2}$ has a switch at position 5, but it is the switch from position 3 in $D_{1,2}$ that 
has moved two cells to the right (Figure~\ref{fig:t1t2(a)}). If $y_6=1$, then $V_{1,2}$ neither has a switch at position 3 nor at position 5, which means that both switches from $D_{1,2}$ disappear, since two blocks of 1s merge (Figures~\ref{fig:t1t2(b)}). 
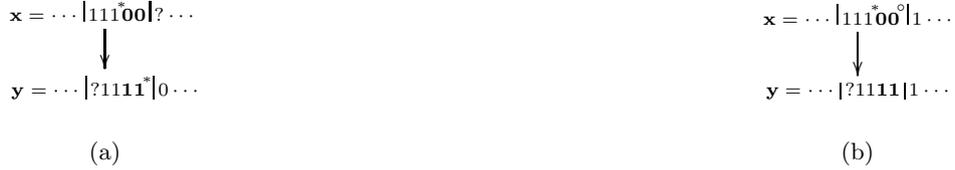
\begin{figure}[h!]
    \centering
    \begin{subfigure}{0.45\textwidth}
{\scriptsize\[
\xymatrix@C=0.2cm@R=0.5cm{ 
\bx = \cdots \;\lllp 111\gn{*}\boldsymbol{00} \, \lllp\, ?\cdots\;\ar[d] \\
\by = \cdots \;\lllp\, ?11\boldsymbol{11}\gn{*} \;\lllp\, 0\cdots
}
\]}
    \caption{}
    \label{fig:t1t2(a)}
\end{subfigure}
\hfill
 \begin{subfigure}{0.45\textwidth}
{\scriptsize\[
\xymatrix@C=0.2cm@R=0.5cm{ 
\bx = \cdots \;\lllp  111\gn{*}\boldsymbol{00}\gn{\circ} \;\lllp\,1 \cdots\ar[d] \\
\by = \cdots \;\lllp \, ?11\boldsymbol{11} \,\lllp\, 1\cdots 
}
\]}
    \caption{}
    \label{fig:t1t2(b)}
\end{subfigure}
 \caption{The effect of $T_{1,2}$ on switches in $D_{1,2}$ in the case (a) $y_{6}=0$, (b) $y_{6}=1$.}
        \label{fig:t1t2}
\end{figure}

We can recognize both situations in Figure~\ref{fig:example} when going from $t=0$ to $t=1$: switch 4 disappears, while switch 8 moves two cells to the right. We summarize our analysis below.

\begin{lem}\label{lem:t1t2}
The action of the pair $T_{1,2}$ does not produce new switches. Moreover, if as a result of its action two blocks of 1s merge, then this leads to a reduction in the number of switches.
\end{lem}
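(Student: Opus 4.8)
The plan is to prove Lemma~\ref{lem:t1t2} by directly combining the two case analyses already carried out in the text, since the figures~\ref{fig:t1t2(a)} and~\ref{fig:t1t2(b)} exhaust the possibilities for the single undetermined cell $y_6$. The local rule only changes $x_4x_5=00$ into $y_4y_5=11$, so the only positions at which a switch in $V_{1,2}$ could differ from a switch in $D_{1,2}$ are positions $3$ and $5$; this is exactly the observation that lets us ignore the question mark at $y_1$. First I would make precise that ``no new switches are produced'' means every switch appearing in $V_{1,2}$ can be identified with a switch of $D_{1,2}$ (possibly displaced), and that switches outside the affected window $\{3,4,5\}$ are untouched because the corresponding cells are unchanged.

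The core of the argument is then the dichotomy on $y_6$. If $y_6=0$, the window $\by$ reads $\cdots 11110\cdots$ so there is a single switch at position $5$, and we observe that the two switches of $D_{1,2}$ (at positions $3$ and $5$, the block boundaries of $111$ and $00$) have been replaced by one switch: the $r$-switch at position $3$ in $D_{1,2}$ has moved two cells right, and the $r$-switch at position $5$ has been annihilated against it. If $y_6=1$, the window reads $\cdots 111111\cdots$, so both block boundaries vanish and $V_{1,2}$ contains no switch at positions $3$ or $5$: both switches of $D_{1,2}$ disappear because the two flanking blocks of $1$s merge. In either case the number of switches strictly contributed by this window does not increase, which gives the first assertion.

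For the second assertion I would isolate the merging case, namely $y_6=1$. Here the two blocks of $1$s on either side of the $00$ coalesce, and the count of switches attributable to this window drops from two to zero, a net reduction of two; this is what ``a reduction in the number of switches'' records. The only subtlety worth flagging is that this statement is local: I would note that one must track switches as global objects (the markers $*$ and $\circ$ in the figures serve precisely this bookkeeping role) to be sure that a switch ``disappearing'' here is not silently re-created elsewhere by another simultaneously-acting AT pair. Within the scope of this lemma, however, the domain $D_{1,2}$ together with the single lookahead cell $x_6$ fully determines the images at positions $3,4,5$, so no such interference can occur inside the window, and the local accounting is legitimate.

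The step I expect to be the main obstacle is the bookkeeping of switch identity across the window boundary, i.e.\ justifying that when $y_6=0$ the surviving switch at position $5$ in $V_{1,2}$ is genuinely the \emph{same} switch (the one from position $3$ in $D_{1,2}$) rather than a newly created one — and, dually, that the switch formerly at position $5$ in $D_{1,2}$ has truly been absorbed rather than merely shifted past the boundary. This is not a hard computation but it is the conceptually delicate point, because the whole monotonicity argument of the paper rests on the invariant that switches only move or vanish and are never born. I would handle it by appealing to the marker tracking displayed in the figures and by the observation that any switch to the right of position $5$ lives among cells $y_6,y_7,\ldots$ that are unaffected by this AT pair, so the identification is forced.
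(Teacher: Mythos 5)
Your proof follows the paper's own route (the case split on $y_6$ and the identification of the image switch at position 5 with the domain switch at position 3), and the two assertions of the lemma do come out correctly. However, your treatment of the case $y_6=0$ contains a false step: you claim that $D_{1,2}$ carries two switches, at positions 3 and 5, and that the one at position 5 ``has been annihilated against'' the moving one. The domain $|11100|$ guarantees only the switch at position 3. Whether a switch exists at position 5 depends on $x_6$, which is \emph{not} determined by your case hypothesis: $y_6=0$ is compatible both with $x_6=0$ (then there is no switch at position 5 in $\bx$, nothing is annihilated, and the local count is preserved, not reduced) and with $x_6=1$ (then necessarily $T_5$ acts at cell 6 because $x_5x_6x_7x_8=0110$, and the disappearance of that switch is booked in the overlap analysis of $T_{5,6}$, Figure~\ref{fig:t5t6-cd}, not in this window). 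The paper's Figure~\ref{fig:t1t2(a)} accordingly marks only one switch in this case; by contrast, in the merging case $y_6=1$ one can check that $x_6=1$ is forced (no active transition can turn $x_6=0$ into $y_6=1$ immediately to the right of $11100$), so there both switches genuinely exist and genuinely vanish.

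The error is not cosmetic. If every application of $T_{1,2}$ with $y_6=0$ annihilated a switch, then $T_{1,2}$ would always strictly decrease $s$, contradicting the example in Figure~\ref{fig:example} (from $t=12$ to $t=17$ only $T_{1,2}$ acts and $s$ remains equal to $2$) and, later in the paper, Table~\ref{tab-3}, which lists $D_{1,2}$ among the domains allowed in configurations of $\xconst$. The precise content of the lemma --- and of the corresponding clause of Proposition~\ref{thm:c} --- is that $T_{1,2}$ reduces the number of switches \emph{only} when two blocks of 1s merge. The repair is immediate: in the case $y_6=0$, assert only that the single guaranteed switch at position 3 reappears at position 5, and make no claim about a second one.
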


\bigskip

Analogous considerations for the pair $T_{3,4}$ are presented in Figure~\ref{fig:t3t4}.

\begin{figure}[h!]
    \centering
    \begin{subfigure}{0.45\textwidth}
{\scriptsize\[
\xymatrix@C=0.2cm@R=0.5cm{ 
\bx = \cdots \;\lllp 001\gn{*}\boldsymbol{00} \, \lllp\,? \cdots\;\ar[d] \\
\by = \cdots \;\lllp\,??1\boldsymbol{11}\gn{*} \;\lllp\,0 \cdots
}
\]}
    \caption{}
    \label{fig:t3t4(a)}
\end{subfigure}
\hfill
 \begin{subfigure}{0.45\textwidth}
{\scriptsize\[
\xymatrix@C=0.2cm@R=0.5cm{ 
\bx = \cdots \;\lllp  001\gn{*}\boldsymbol{00}\gn{\circ} \;\lllp\,1 \cdots\ar[d] \\
\by = \cdots \;\lllp \,??1\boldsymbol{11} \,\lllp\, 1\cdots 
}
\]}
    \caption{}
    \label{fig:t3t4(b)}
\end{subfigure}
 \caption{The effect of $T_{3,4}$ on switches in $D_{3,4}$ in the case (a) $y_{6}=0$, (b) $y_{6}=1$.}
        \label{fig:t3t4}
\end{figure}
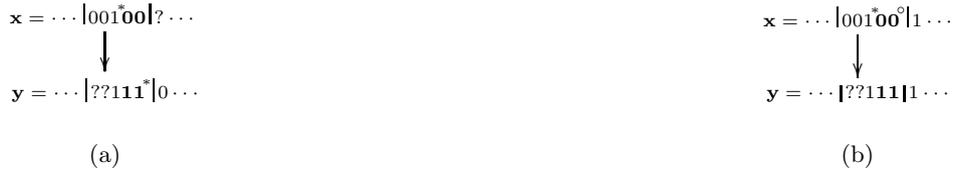

As a result we get the following conclusion.

\begin{lem}\label{lem:t3t4}
The action of the pair $T_{3,4}$ does not produce new switches. Moreover, if as a result of its action two blocks of 1s merge, then this leads to a reduction in the number of switches.
\end{lem}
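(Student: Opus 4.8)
The plan is to repeat, with the obvious mirror symmetry, the reasoning already carried out for $T_{1,2}$ in Lemma~\ref{lem:t1t2}. Assuming without loss of generality that $|x_1x_2x_3x_4x_5|=|00100|=D_{3,4}$, I would start from the observation that $T_{3,4}$ flips only the two central cells, turning $x_4x_5=00$ into $y_4y_5=11$. Consequently the only switches that can change are those at positions $3$ and $5$, and the undetermined values $y_1,y_2$ are irrelevant to the count. I would also record that $x_3x_4=10$ carries a genuine r-switch at position $3$ (marked $*$): it is a `$10$' rather than a `$01$' block, and the left context $x_1=0$ forbids $x_2x_3=01$ from being a box, so neither $x_3$ nor $x_4$ lies in a box.

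Next I would split on the value of $y_6=x_6$ as in Figure~\ref{fig:t3t4}. If $y_6=0$, then $y_3y_4=11$ erases the switch at position $3$ while $y_5y_6=10$ produces a switch at position $5$; this is precisely the $*$-switch shifted two cells to the right, so $s(\by)=s(\bx)$ and no new switch is created. If instead $y_6=1$, then $y_3y_4y_5y_6=1111$ merges the two neighbouring blocks of $1$s, so both the $*$-switch at position $3$ and the r-switch $\circ$ at position $5$ disappear and $s(\by)<s(\bx)$. These two cases give, respectively, the first and the second assertion of the lemma.

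The step I expect to be the genuine obstacle is not the arithmetic but the box bookkeeping that guarantees these counts. Because a switch may be an r-switch or a b-switch depending on non-local context, I must check that the flip neither spuriously creates a box-switch nor hides one: on the left this is handled by $x_1=0$ (no box can end in the $x_4x_5$ suffix, so nothing there is destroyed), and on the right the switch landing at position $5$ is counted exactly once regardless of whether the cells beyond it close up into a box, so the single $*$-switch is neither duplicated nor lost. Once this verification is in place, the argument closes exactly as for $T_{1,2}$, and Lemma~\ref{lem:t3t4} follows.
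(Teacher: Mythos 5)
Your proposal is correct and is essentially the paper's own proof, which treats $T_{3,4}$ as the exact mirror of $T_{1,2}$ (Lemma~\ref{lem:t1t2} and Figure~\ref{fig:t3t4}): the same split on $y_6$, with the r-switch at position~3 either shifting two cells to the right (landing at position~5, whether or not a box forms there) or vanishing together with the switch at position~5 when two blocks of 1s merge, and the question marks $y_1,y_2$ left to the analyses of whatever overlapping domains produce them. The only nit is your parenthetical claim $y_6=x_6$, which is not automatic (an overlapping $T_5$ can turn $x_6=1$ into $y_6=0$); but only the implication $y_6=1\Rightarrow x_6=1$ is actually used, and that one holds because no active transition can flip a $0$ to a $1$ following the context $x_3x_4x_5=100$.
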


\subsection{The pair $T_{5,6}$} 

Assume that $|x_1x_{2}x_{3}x_{4}|=|0110|=D_{5,6}$. Since the pair $T_{5,6}$ only changes $x_{2}x_{3}=11$ to $y_{2}y_{3}=00$, it can only affect switches at positions 1 and 3. However, we should be careful, because the switch at position 3 can be a b-switch, in which case its coverage is wider. For this reason, it is convenient to consider two variants of the domain $D_{5,6}$: $D_{5,6}^r=|0110|$ with $x_5x_6x_7\neq 100$, and $D_{5,6}^b=|0110100|$. In $D_{5,6}^r$ there are two r-switches, while $D_{5,6}^b$ contains one r-switch and one b-switch. 

Figure~\ref{fig:t5t6} shows the effect of the pair $T_{5,6}$ on the switches when assuming that $y_1=0$.
In the case of $D_{5,6}^r$, both r-switches at positions $1$ and $3$ disappear (Figure~\ref{fig:t5t6(a)}).
In the case of $D_{5,6}^b$, both the r-switch at position $1$ and the b-switch at position $4$ move to the right; 
moreover, the b-switch turns into an r-switch (Figure~\ref{fig:t5t6(b)}).
\begin{figure}[h!b]
    \centering
    \begin{subfigure}{0.46\textwidth}
{\scriptsize\[
\xymatrix@C=0.2cm@R=0.5cm{ 
\bx = \cdots \;\lllp  0\gn{*}\boldsymbol{11}\gn{\circ} \ar[d]  0\,\lllp\,\cdots\ar[d]  \\
\by = \cdots \;\lllp 0\boldsymbol{00} 0\,\lllp\,\cdots 
}
\]}
    \caption{}
    \label{fig:t5t6(a)}
\end{subfigure}
\hfill
\begin{subfigure}{0.45\textwidth}
{\scriptsize\[
\xymatrix@C=0.2cm@R=0.5cm{ 
\bx = \cdots \;\lllp  0\gn{*}\boldsymbol{11}\gn{\circ} \ar[d]  \fcolorbox{gray}{white}{01}00\,\lllp\,\cdots\ar[d]  \\
\by = \cdots \;\lllp 0\boldsymbol{00}0   \gn{*}1\gn{\circ}00\,\lllp\,\cdots 
}
\]}
    \caption{}
    \label{fig:t5t6(b)} 
\end{subfigure}
\caption{The effect of $T_{5,6}$ on the switches in the case $y_1=0$ and (a) $D_{5,6}^r$, (b) $D_{5,6}^b$.}\label{fig:t5t6}
\end{figure}
\begin{figure}[h!b]
    \centering
    \begin{subfigure}{0.46\textwidth}
{\scriptsize\[
\xymatrix@C=0.2cm@R=0.5cm{ 
\bx = \cdots 111\gn{\diamond}0\,\lllp \, 0\gn{*}\boldsymbol{11}\gn{\circ} \ar[d]  0\,\lllp\,\cdots\ar[d]  \\
\by = \cdots \,?111\,\lllp\,1\gn{\diamond}\boldsymbol{00} 0\,\lllp\,\cdots 
}
\]}
    \caption{}
    \label{fig:t5t6-cd(a)}
\end{subfigure}
\hfill
\begin{subfigure}{0.45\textwidth}
{\scriptsize\[
\xymatrix@C=0.2cm@R=0.5cm{ 
\bx = \cdots 111\gn{\diamond}0\,\lllp \, 0\gn{*}\boldsymbol{11}\gn{\circ} \ar[d]  \fcolorbox{gray}{white}{01}00\,\lllp\,\cdots\ar[d]  \\
\by = \cdots \, ?111\,\lllp\,1\gn{\diamond}\boldsymbol{00}0   \gn{*}1\gn{\circ}00\,\lllp\,\cdots 
}
\]}
    \caption{}
    \label{fig:t5t6-cd(b)} 
\end{subfigure}
\caption{The effect of $T_{5,6}$ on the switches in the case $y_1=1$ and (a) $D_{5,6}^r$, (b) $D_{5,6}^b$.}\label{fig:t5t6-cd}
\end{figure}

We can also recognize both situations in Figure~\ref{fig:example}: switches 3 and 6 move to the right at time step 4, 
and disappear at time step 12.

Next, assume that $y_1=1$, which must have resulted from $T_2$ or $T_4$ (Table~\ref{tab}); so, as described in the previous subsection, the switch at position 1 in $V_{5,6}$ is in fact the switch from $D_{1,2}$ or $D_{3,4}$, respectively, overlapped by the considered $D_{5,6}$, which has moved two cells to the right. Figure~\ref{fig:t5t6-cd} depicts the situation in the case of $T_2$.

We summarize our analysis below.

\begin{lem}\label{lem:t5t6}
The action of the pair $T_{5,6}$ does not produce new switches. Moreover, if $T_{5,6}$ acts on $D_{5,6}^r$, then it leads to a reduction in the number of switches.
\end{lem}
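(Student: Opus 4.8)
The plan is to establish both assertions by a short finite case analysis, organised exactly along the windows already drawn in Figures~\ref{fig:t5t6} and~\ref{fig:t5t6-cd}. I would first normalise to $|x_1x_2x_3x_4|=|0110|=D_{5,6}$ and record the only facts needed about the image: by Table~\ref{tab-2} one has $y_2=y_3=y_4=0$, while $T_{5,6}$ changes nothing except $x_2x_3=11\mapsto y_2y_3=00$. Hence the only cells whose values change are $x_2$ and $x_3$, so the only switches that can possibly be created or destroyed are those incident to these cells, namely the switch at position $1$ and the switch at position $3$, together with the box beginning at $x_4$ in case the latter switch is a b-switch (since its defining left-neighbour $x_3$ is altered). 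All switches strictly to the left of position $1$ or strictly to the right of position $3$ (respectively, of that box) are untouched, which confines the whole argument to the finite windows of the figures.

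I would then split on the form of the right switch, giving the two variants $D_{5,6}^r$ and $D_{5,6}^b$, and orthogonally on the value of $y_1$. When $y_1=0$ the image window reads $0000$ in the $D_{5,6}^r$ case and $0000100$ in the $D_{5,6}^b$ case, and one reads off directly that: for $D_{5,6}^r$ both r-switches disappear (Figure~\ref{fig:t5t6(a)}); for $D_{5,6}^b$ the r-switch at position $1$ and the b-switch at position $3$ both persist but are displaced to the right, the b-switch turning into an r-switch (Figure~\ref{fig:t5t6(b)}). In neither situation does a new switch appear, and the $D_{5,6}^r$ case loses exactly its two switches, which gives the claimed reduction.

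The only step needing genuine care is $y_1=1$, since a switch then re-appears at position $1$ of $V_{5,6}$ and must be shown not to be new. Here I would use Table~\ref{tab} to observe that $y_1=1$ can arise only from $T_2$ or $T_4$, the sole active transitions sending a central $0$ to $1$ in that position; consequently $x_1$ is simultaneously the overlap cell of a domain $D_{1,2}$ or $D_{3,4}$ lying immediately to its left. The switch that surfaces at position $1$ in $\by$ is then exactly the switch $\diamond$ owned by that left domain: because $T_{5,6}$ forces $y_2=0$, we are in the non-merging branch of Lemmas~\ref{lem:t1t2} and~\ref{lem:t3t4}, so $\diamond$ has merely been shifted two cells to the right (Figures~\ref{fig:t5t6-cd(a)} and~\ref{fig:t5t6-cd(b)}). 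It is therefore already accounted for by the $T_{1,2}$ (or $T_{3,4}$) analysis and is not charged to $T_{5,6}$, while the switches genuinely belonging to $D_{5,6}$ behave precisely as for $y_1=0$. Thus no new switch is produced, and the $D_{5,6}^r$ variant still loses its two r-switches. I expect this ownership bookkeeping across overlapping domains to be the main obstacle: the whole reduction count is valid only once one checks that the surviving switch at position $1$ is consistently assigned to the left domain—and hence not double-counted—so that the two r-switches of $D_{5,6}^r$ may be charged as a genuine net reduction.
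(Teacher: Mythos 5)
Your proposal is correct and follows essentially the same route as the paper: the same normalisation to $|x_1x_2x_3x_4|=|0110|$, the same split into $D_{5,6}^r$/$D_{5,6}^b$ and $y_1=0$/$y_1=1$, and the same resolution of the $y_1=1$ case by attributing the surviving switch at position~1 to the overlapping domain $D_{1,2}$ or $D_{3,4}$ (whose switch, per Lemmas~\ref{lem:t1t2} and~\ref{lem:t3t4}, has merely shifted two cells to the right). Your explicit ``ownership bookkeeping'' remark just makes precise what the paper expresses as ``the switch at position 1 in $V_{5,6}$ is in fact the switch from $D_{1,2}$ or $D_{3,4}$'', and your placement of the b-switch at position~3 is the one consistent with Definition~\ref{def:switch} and the figures.
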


\subsection{The pair $T_{7,8}$} 

Let $|x_1x_{2}x_{3}x_{4}x_5x_6|=|001010|=D_{7,8}$. Since the pair $T_{7,8}$ only changes $x_{3}x_{4}=10$ to $y_{3}y_{4}=01$, it can only affect switches at positions 2, 3 and 4. Thus, there is no need to consider the switches at positions 1 and 5.
As before, it is convenient to distinguish two variants of the domain $D_{7,8}$: $D_{7,8}^r=|0010101|$ and $D_{7,8}^b=|0010100|$.

Let us first assume that $y_2=0$. In the case of $D_{7,8}^r=|0010101|$, all three switches at positions 2, 3, 4 are regular; as shown in Figure~\ref{fig:t7t8(a)}, the action of $T_{7,8}$ on $D_{7,8}^r$ reduces the number of these switches. In turn, $D_{7,8}^b=|0010100|$ has an r-switch at position 2 and a b-switch at position 3, which in the next time step both move to the right while the b-switch turns into an r-switch. Moreover, as a result we obtain $D_{5,6}^b$ in $\by$ (Figure~\ref{fig:t7t8(b)}).
\begin{figure}[h!b]
    \centering
    \begin{subfigure}{0.45\textwidth}
{\scriptsize\[
\xymatrix@C=0.2cm@R=0.5cm{ 
\bx = \cdots \;\lllp 00\gn{*}\boldsymbol{1}\gn{\circ}\boldsymbol{0}\gn{\prime} \ar[d]  101\,\lllp\,\cdots\ar[d]  \\
\by = \cdots \;\lllp{?}0\boldsymbol{0}\gn{*}\boldsymbol{1}10{?}\,  \lllp\,\cdots   
}
\]}
    \caption{}
    \label{fig:t7t8(a)}
\end{subfigure}
\hfill
 \begin{subfigure}{0.45\textwidth}
{\scriptsize\[
\xymatrix@C=0.2cm@R=0.5cm{ 
\bx = \cdots \;\lllp  00\gn{*}\boldsymbol{1}\gn{\circ} \fcolorbox{gray}{white}{{\bf{0}}1} 00\,\lllp\,\cdots \ar[d]   \\
\by = \cdots \;\lllp{?}0\boldsymbol{0}\gn{*}\boldsymbol{1}1\gn{\circ}00  \,\lllp\,\cdots  
}
\]}
    \caption{}
    \label{fig:t7t8(b)}
\end{subfigure}
\caption{The effect of $T_{7,8}$ on the switches in the case $y_2=0$ and (a) $D_{7,8}^r$, (b) $D_{7,8}^b$.}
\end{figure}

\begin{figure}[h!b]
    \centering
    \begin{subfigure}{0.45\textwidth}
{\scriptsize\[
\xymatrix@C=0.2cm@R=0.5cm{ 
\bx = \cdots 111\gn{\diamond}\;\lllp \, 00\gn{*}\boldsymbol{1}\gn{\circ}\boldsymbol{0}\gn{\prime} \ar[d]  101\,\lllp\,\cdots\ar[d]  \\
\by = \cdots {?}11 \;\lllp \,11\gn{\diamond}\boldsymbol{0}\gn{*}\boldsymbol{1}10?  \,\lllp\,\cdots   
}
\]}
    \caption{}
    \label{fig:t7t8(a)-cd}
\end{subfigure}
\hfill
 \begin{subfigure}{0.45\textwidth}
{\scriptsize\[
\xymatrix@C=0.2cm@R=0.5cm{ 
\bx = \cdots 111\gn{\diamond}\;\lllp \, 00\gn{*}\boldsymbol{1}\gn{\circ} \fcolorbox{gray}{white}{{\bf{0}}1} 00\,\lllp\,\cdots \ar[d]   \\
\by = \cdots {?}11\,\lllp\,11\gn{\diamond}\boldsymbol{0}\gn{*}\boldsymbol{1}1\gn{\circ}00  \,\lllp\,\cdots  
}
\]}
    \caption{}
    \label{fig:t7t8(b)-cd}
\end{subfigure}
\caption{The effect of $T_{7,8}$ on the switches in the case $y_2=1$ and  
        (a) $D_{7,8}^r$, (b) $D_{7,8}^b$.}
\label{fig:t7t8-cd}
\end{figure}

Next, assume that $y_2=1$, then it must be the result of $T_2$ or $T_4$, so, the switch at position 2 in $V_{5,6}$ is in fact the switch from $D_{1,2}$  or $D_{3,4}$ (overlapped by the considered $D_{7,8}$), which moved two cells to the right. We present such a situation in Figure~\ref{fig:t7t8-cd} in the case of $T_2$.

We summarize our analysis below.

\begin{lem}\label{lem:t7t8}
The action of the pair $T_{7,8}$ does not produce new switches. Moreover, if $T_{7,8}$ acts on $D_{7,8}^r$, then it leads to a reduction 
in the number of switches; if it acts on $D_{7,8}^b$, then $D_{5,6}^r$ is created in the next time step.
\end{lem}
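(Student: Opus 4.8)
The plan is to follow the same local bookkeeping used for the earlier pairs: fix an occurrence of the domain $D_{7,8}=|001010|$, say at positions $x_1\ldots x_6$, and compare the switches in $D_{7,8}$ with those appearing in the corresponding image $V_{7,8}$ computed from Table~\ref{tab}. The first observation I would record is that $T_{7,8}$ alters only the central pair $x_3x_4=10$, turning it into $y_3y_4=01$; hence the only switches that can possibly change are those at positions $2$, $3$ and $4$, and it suffices to track these three (position $1$ carries no switch, and the switch at position $5$ is untouched by the transformation). This reduces the whole statement to a finite comparison at three positions, governed by two pieces of data: the right-hand context, which splits $D_{7,8}$ into $D_{7,8}^r=|0010101|$ and $D_{7,8}^b=|0010100|$, and the left boundary value $y_2$.

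First I would treat the case $y_2=0$. For $D_{7,8}^r$ the three switches at positions $2,3,4$ are all regular, and reading off $V_{7,8}$ from Table~\ref{tab} gives the block shown in Figure~\ref{fig:t7t8(a)}: exactly one switch survives (the leftmost, displaced one cell to the right), the other two disappearing, so the switch count strictly drops and no switch is created at position $5$. For $D_{7,8}^b$ the box sitting at $x_4x_5$ makes the switch at position $3$ a b-switch; computing $V_{7,8}$ (Figure~\ref{fig:t7t8(b)}) shows that the r-switch at position $2$ and this b-switch both shift one cell to the right, the b-switch becoming an r-switch, and that the image contains the block $0110$ immediately followed by $00$. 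I would then verify against the right context that this block is $D_{5,6}^r$ rather than $D_{5,6}^b$, which is exactly the claimed conclusion; here no new switch is produced and the count is merely preserved.

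It remains to handle $y_2=1$. The key point, which I would justify by inspecting Table~\ref{tab}, is that in this configuration the value $y_2=1$ can only be produced by an active transition of type $T_2$ or $T_4$ centred at position $2$, that is, by a domain $D_{1,2}$ or $D_{3,4}$ overlapping our $D_{7,8}$ from the left. By Lemmas~\ref{lem:t1t2} and~\ref{lem:t3t4}, the switch appearing at the left boundary of $V_{7,8}$ is therefore not new: it is the switch inherited from that overlapping domain, displaced two cells to the right. With this identification I would re-run the two subcases above (Figure~\ref{fig:t7t8-cd}); the behaviour at positions $3$ and $4$ is identical, so the same conclusions hold, namely a strict reduction on $D_{7,8}^r$ and the creation of $D_{5,6}^r$ on $D_{7,8}^b$, and again no switch is created.

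I expect the only delicate step to be the $y_2=1$ case: the entire argument that no spurious switch is introduced at the left boundary rests on the exhaustiveness of Table~\ref{tab}, namely that $T_2$ and $T_4$ are the only active transitions able to set $y_2=1$ in the presence of $D_{7,8}$, so that the boundary switch can be charged to an already-analysed overlapping domain rather than counted as new. The secondary point requiring care is confirming, in the $D_{7,8}^b$ subcase, that the emergent $0110$ block really is the $r$-variant $D_{5,6}^r$; this follows once the two cells to the right of the block are checked to be $00$, but it must be stated explicitly, since the distinction between $D_{5,6}^r$ and $D_{5,6}^b$ is precisely what governs whether a reduction is forced on the subsequent step.
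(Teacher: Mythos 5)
Your proposal is correct and follows essentially the same route as the paper's own analysis: the same reduction to switches at positions 2--4, the same split into $D_{7,8}^r$ versus $D_{7,8}^b$ and $y_2\in\{0,1\}$, and the same key step of charging the left-boundary switch in the $y_2=1$ case to an overlapping $D_{1,2}$ or $D_{3,4}$ via the exhaustiveness of the transition table ($T_2$ and $T_4$ being the only transitions that can set $y_2=1$ here). Two harmless slips worth noting: in the $D_{7,8}^b$ case the b-switch actually moves \emph{two} cells to the right (not one), and only the single cell $y_7=0$ (not two cells) can be read off from the domain itself --- but that alone already rules out $D_{5,6}^b$ and forces the $r$-variant, so your conclusion stands.
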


Note that, due to Lemma~\ref{lem:t5t6}, the action of the pair $T_{7,8}$ always leads to a reduction in the number of switches, either immediately or after two time steps.

\subsection{The pair $T_{9,10}$} 
To compare the switches in $D_{9,10}=|x_1x_{2}x_{3}x_{4}x_{5}x_{6}|=|111010|$ and $V_{9,10}$, we need to consider three variants of this domain: $D_{9,10}^r=|1110101|$ with $x_8x_9\neq 00$, $D_{9,10}^b=|1110100|$, and $D_{9,10}^{rb}=|111010100|$.

\begin{figure}[h!]
    \centering
    \begin{subfigure}{0.32\textwidth}
{\scriptsize\[
\xymatrix@C=0.2cm@R=0.5cm{ 
\bx = \cdots \;\lllp 1{\boldsymbol{1}}1\gn{*}0\gn{\circ}{\boldsymbol{1}}\gn{\prime}   0\,\lllp\,\cdots\ar[d]  \\
\by = \cdots \;\lllp 1 \gn{*}\fcolorbox{gray}{white}{{\bf{0}}1}0\boldsymbol{0}0  \,\lllp\,\cdots   
}
\]}
    \caption{}
    \label{fig:t9t10(a)}
\end{subfigure}
\hfill
 \begin{subfigure}{0.32\textwidth}
{\scriptsize\[
\xymatrix@C=0.2cm@R=0.5cm{ 
\bx = \cdots \;\lllp1{\boldsymbol{1}}1 \ar[d]  \gn{*}\fcolorbox{gray}{white}{{0\bf{1}}} 00\,\lllp\,\cdots\ar[d]  \\
\by = \cdots \;\lllp 1\gn{*}\fcolorbox{gray}{white}{{\bf{0}}1} 0{\boldsymbol{0}}00  \,\lllp\,\cdots 
}
\]}
    \caption{}
    \label{fig:t9t10(b)}
\end{subfigure}
\hfill
\begin{subfigure}{0.32\textwidth}
{\scriptsize\[
\xymatrix@C=0.2cm@R=0.5cm{ 
\bx = \cdots \;\lllp  1{\boldsymbol{1}}1\gn{*}0\gn{\circ}\boldsymbol{1}   \gn{\prime}\fcolorbox{gray}{white}{01} 00\,\lllp\,\cdots\ar[d]    \\
\by = \cdots \;\lllp 1\gn{*}\fcolorbox{gray}{white}{{\bf{0}}1} 0{\boldsymbol{0}}0\gn{\circ}1\gn{\prime}00  \,\lllp\,\cdots  
}
\]}
    \caption{}
    \label{fig:t9t10(c)} 
\end{subfigure}
\caption{The effect of $T_{9,10}$ on the switches in the case $y_1=1$ and
        (a) $D_{9,10}^r$, (b) $D_{9,10}^b$, (c) $D_{9,10}^{rb}$.}
\end{figure}
\begin{figure}[!h]
    \centering
    \begin{subfigure}{0.32\textwidth}
{\scriptsize\[
\xymatrix@C=0.2cm@R=0.5cm{ 
\bx = \cdots 1110\gn{\diamond}\;\lllp  \,{1}\boldsymbol{1}1\gn{*}0\gn{\circ}\boldsymbol{1}\gn{\prime}   0\,\lllp\,\cdots\ar[d]  \\
\by = \cdots {?}010\;\lllp\;0{\bf{0}}\gn{\diamond}1\gn{*}0\boldsymbol{0}0  \,\lllp\,\cdots   
}
\]}
    \caption{}
    \label{fig:t9t10-cd(a)}
\end{subfigure}
\hfill
 \begin{subfigure}{0.32\textwidth}
{\scriptsize\[
\xymatrix@C=0.2cm@R=0.5cm{ 
\bx = \cdots 1110\gn{\diamond} \;\lllp \, 1\boldsymbol{1}1 \ar[d]  \gn{*}\fcolorbox{gray}{white}{{0\bf{1}}} 00\,\lllp\,\cdots\ar[d]  \\
\by = \cdots {?}010\;\lllp\,0{\bf{0}}\gn{\diamond}1 \gn{*}0\boldsymbol{0}00  \,\lllp\,\cdots 
}
\]}
    \caption{}
    \label{fig:t9t10-cd(b)}
\end{subfigure}
\hfill
\begin{subfigure}{0.32\textwidth}
{\scriptsize\[
\xymatrix@C=0.2cm@R=0.5cm{ 
\bx = \cdots 1110\gn{\diamond}\;\lllp  \,1\boldsymbol{1}1\gn{*}0\gn{\circ}\boldsymbol{1}   \gn{\prime}\fcolorbox{gray}{white}{01} 00\,\lllp\,\cdots\ar[d]    \\
\by = \cdots {?}010\;\lllp\,0{\bf{0}}\gn{\diamond}1\gn{*} 0\boldsymbol{0}0\gn{\circ}1\gn{\prime}00  \,\lllp\,\cdots  
}
\]}
    \caption{}
    \label{fig:t9t10-cd(c)} 
\end{subfigure}
\caption{The effect of $T_{9,10}$ on the switches in the case $y_1=0$ and
      (a) $D_{9,10}^r$, (b) $D_{9,10}^b$, (c) $D_{9,10}^{rb}$.}\label{fig:t9t10-cd}
\end{figure}

Let us first assume that $y_1=1$. In $D_{9,10}^r$ there are three r-switches at positions 3, 4 and 5. In the next time step, the first one moves two cells to the left and turns into a b-switch, whereas the others disappear (Figure~\ref{fig:t9t10(a)}). In $D_{9,10}^b$ there is only one b-switch at position 3, which moves two cells to the left (Figure~\ref{fig:t9t10(b)}).

In $D_{9,10}^{rb}$ there are two r-switches at positions 3 and 4 and a b-switch at position 5. In $\by$, the first moves two cells to the left and turns into a b-switch, whereas the other ones move two cells to the right and the b-switch turns into an r-switch (Figure~\ref{fig:t9t10(c)}).

Let us next assume that $y_1=0$. This must be the result of $T_9$, thus the considered domain $D_{9,10}$ overlaps the domain $D_{9,11}$ in such a way that $|x_{-3}x_{-2}x_{-1}x_{0}x_{1}x_{2}x_{3}|=D_{9,11}=|1110111|$ (Table~\ref{tab}).
But then, as shown in Figure~\ref{fig:t9t10-cd}, the number of switches in $V_{9,10}^r$ decreases, whereas it does not increase in $V_{9,10}^b$ and $V_{9,10}^{rb}$, since the switch at position~2 stems from the domain $D_{9,11}$ being overlapped. (It will be shown in the next subsection that the r-switch $^{\diamond}$ in $D_{9,11}=|1110\gn{\diamond}111|$ in such situations always moves two cells to the right.)

\begin{lem}\label{lem:t9t10}
The action of the pair $T_{9,10}$ does not produce new switches. Moreover, if $T_{9,10}$ acts on $D_{9,10}^r$, then it leads to a reduction in the number of switches.
\end{lem}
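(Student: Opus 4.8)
The plan is to establish both claims by the same domain-by-domain bookkeeping used in the preceding subsections, splitting first on the value of $y_1$ and then on which of the three variants $D_{9,10}^r$, $D_{9,10}^b$, $D_{9,10}^{rb}$ occurs. Since the pair $T_{9,10}$ only flips the two marked $1$s inside the window $|111010|$ to $0$s, only the switches lying in a bounded neighbourhood of this window can change status, and Figures~\ref{fig:t9t10(a)}--\ref{fig:t9t10(c)} together with Figure~\ref{fig:t9t10-cd} already record exactly which switches these are. This locality is what licenses a purely local argument.

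First I would dispose of the case $y_1=1$, where the image is read off directly from Table~\ref{tab} and the three variants correspond to Figures~\ref{fig:t9t10(a)}--\ref{fig:t9t10(c)}. In each subcase I would exhibit, for every switch of $V_{9,10}$, a switch already present in $D_{9,10}$ that produced it, following the markers $*,\circ,\prime$: for $D_{9,10}^r$ the three r-switches at positions $3,4,5$ coalesce into the single b-switch that has migrated two cells to the left, a strict decrease; for $D_{9,10}^b$ the lone b-switch is merely translated two cells to the left; and for $D_{9,10}^{rb}$ the leftmost r-switch migrates left and turns into a b-switch while the surviving r- and b-switch migrate right (the latter turning into an r-switch). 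Since no switch of $V_{9,10}$ lacks a preimage in $D_{9,10}$, this settles the ``no new switches'' assertion for $y_1=1$ and yields the strict reduction in the $D_{9,10}^r$ subcase.

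The delicate case is $y_1=0$. Here the digit written at position $1$ is not forced by $D_{9,10}$ alone; by Table~\ref{tab} it can only be produced by $T_9$, which compels the overlap $|x_{-3}x_{-2}x_{-1}x_0x_1x_2x_3|=D_{9,11}=|1110111|$. Using Figure~\ref{fig:t9t10-cd} I would argue that the switch surfacing at position $2$ of $V_{9,10}$ is not new but is the r-switch ${}^{\diamond}$ inherited from the overlapped $D_{9,11}$, so that, counting switches over the combined window, the total does not increase in the $D_{9,10}^b$ and $D_{9,10}^{rb}$ subcases and strictly decreases in the $D_{9,10}^r$ subcase. The single ingredient that cannot be closed within this subsection is the behaviour of that inherited switch, namely that the r-switch ${}^{\diamond}$ of $D_{9,11}$ reliably moves two cells to the right; I would invoke this as a forward reference to the analysis of $T_{9,11}$ in the next subsection. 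This cross-talk between overlapping domains is the main obstacle, as it is the only point where the fate of a switch is governed by a neighbouring AT pair rather than by $T_{9,10}$ acting in isolation.
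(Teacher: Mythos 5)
Your proposal is correct and follows essentially the same route as the paper: the same split into $D_{9,10}^r$, $D_{9,10}^b$, $D_{9,10}^{rb}$ under $y_1=1$ with identical switch-tracking conclusions, and the same handling of $y_1=0$ via the forced overlap with $D_{9,11}$ (through $T_9$), including the forward reference to the $T_{9,11}$ subsection for the fact that the inherited r-switch $^{\diamond}$ moves two cells to the right. Nothing is missing relative to the paper's own argument.
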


\subsection{The pair $T_{9,11}$} 

The consideration of the pair $T_{9,11}$ is a bit more involved, because there are three question marks in $V_{9,11}$ (see Table~\ref{tab-2}). Let $|x_1x_{2}x_{3}x_{4}x_{5}x_{6}x_{7}|= |1110111|= D_{9,11}$. The pair $T_{9,11}$ changes the values $x_2=x_5=1$ to $y_2=y_5=0$.

\vspace{-20 pt}
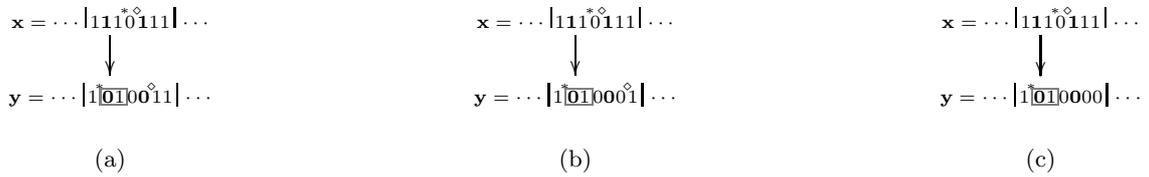
\begin{figure}[h!]
    \centering
    \begin{subfigure}{0.32\textwidth}
{\scriptsize\[
\xymatrix@R=0.5cm{ 
\bx = \cdots \;\lllp  1\boldsymbol{1}1\gn{*}0\gn{\diamond}\boldsymbol{1}11   \,\lllp\,\cdots\ar[d]  \\
\by = \cdots \;\lllp1\gn{*}\fcolorbox{gray}{white}{{\bf{0}}1}0\boldsymbol{0}\gn{\diamond}11  \,\lllp\,\cdots 
}
\]}
    \caption{}
    \label{fig:t9t11(a)}
\end{subfigure} 
\hfill
\begin{subfigure}{0.32\textwidth}
{\scriptsize\[
\xymatrix@R=0.5cm{ 
\bx = \cdots \;\lllp  1\boldsymbol{1}1\gn{*}0\gn{\diamond}\boldsymbol{1}11   \,\lllp\,\cdots\ar[d]  \\
\by = \cdots \;\lllp1\gn{*}\fcolorbox{gray}{white}{{\bf{0}}1}0\boldsymbol{0}0\gn{\diamond}1  \,\lllp\,\cdots 
}
\]}
\caption{}
\label{fig:t9t11(b)} 
\end{subfigure}
\hfill
\begin{subfigure}{0.32\textwidth}
{\scriptsize\[
\xymatrix@R=0.5cm{ 
\bx = \cdots \;\lllp  1\boldsymbol{1}1\gn{*}0\gn{\diamond}\boldsymbol{1}11   \,\lllp\,\cdots\ar[d]  \\
\by = \cdots \;\lllp{1}\gn{*}\fcolorbox{gray}{white}{{\bf{0}}1}0\boldsymbol{0}00  \,\lllp\,\cdots 
}
\]}
\caption{}
\label{fig:t9t11(c)} 
\end{subfigure}
\caption{The effect of $T_{9,11}$ on switches in $D_{9,11}$ in the case $y_1=1$ and
(a) $y_6y_7=11$, (b) $y_6y_7=01$, (c) $y_6y_7=00$.}
\end{figure}

\vspace{-20 pt}
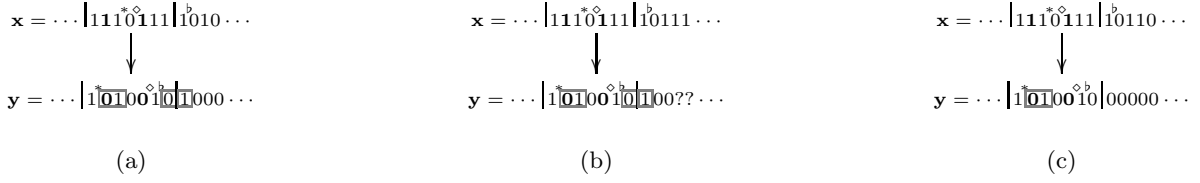
\begin{figure}[h!]
    \centering
    \begin{subfigure}{0.32\textwidth}
{\scriptsize\[
\xymatrix@R=0.5cm{ 
\bx = \cdots \;\lllp  1\boldsymbol{1}1\gn{*}0\gn{\diamond}\boldsymbol{1}11  \,\lllp\, 1\gn{\flat}010\cdots\ar[d]  \\
\by = \cdots \;\lllp1\gn{*}\fcolorbox{gray}{white}{{\bf{0}}1}0\boldsymbol{0}\gn{\diamond}1\gn{\flat}\fcolorbox{gray}{white}{01} \!\!\!\lllp \fcolorbox{gray}{white}{1} 000\cdots 
}
\]}
    \caption{}
    \label{fig:t9t11-cd(a)}
\end{subfigure} 
\hfill
\begin{subfigure}{0.32\textwidth}
{\scriptsize\[
\xymatrix@R=0.5cm{ 
\bx = \cdots \;\lllp  1\boldsymbol{1}1\gn{*}0\gn{\diamond}\boldsymbol{1}11  \,\lllp\, 1\gn{\flat}0111\cdots\ar[d]  \\
\by = \cdots \;\lllp1\gn{*}\fcolorbox{gray}{white}{{\bf{0}}1}0\boldsymbol{0}\gn{\diamond}1\gn{\flat}\fcolorbox{gray}{white}{01} \!\!\!\lllp \fcolorbox{gray}{white}{1} 00{?}{?}\cdots 
}
\]}
\caption{}
\label{fig:t9t11-cd(b)} 
\end{subfigure}
\hfill
\begin{subfigure}{0.32\textwidth}
{\scriptsize\[
\xymatrix@R=0.5cm{ 
\bx = \cdots \;\lllp  1{\boldsymbol{1}}1\gn{*}0\gn{\diamond}{\boldsymbol{1}}11 \,\lllp\,1\gn{\flat}0110\cdots\ar[d]  \\
\by = \cdots \;\lllp1\gn{*}\fcolorbox{gray}{white}{{\bf{0}}1}0{\boldsymbol{0}}\gn{\diamond}1\gn{\flat} 0 \,\lllp\, 00000\cdots 
}
\]}
\caption{}
\label{fig:t9t11-cd(c)} 
\end{subfigure}
\caption{The effect of $T_{9,11}$ on switches in $D_{9,11}$ in the case $y_1=1$ and $y_6y_7=10$, being overlapped by (a)
        $D_{9,10}$, (b) $D_{9,11}$, (c) $D_{9,12}$.}
\end{figure}

Let us first assume that $y_1=1$, then $y_2y_3=01$ is a box, so in $V_{9,11}$ there is a b-switch at position 1, which was previously an r-switch at position 3 in $D_{9,11}$. If $y_6y_7=11$, $y_6y_7=01$ or $y_6y_7=00$, then there may be at most one more switch in $V_{9,11}$, since the switch at position 4 in $D_{9,11}$ has simply moved to the right or disappeared (Figures~\ref{fig:t9t11(a)},~\ref{fig:t9t11(b)},~\ref{fig:t9t11(c)}, respectively).

If $y_6y_7=10$, then the change from $x_7=1$ to $y_7=0$ must be the result of $T_{9}$ (Table~\ref{tab}). Therefore, the considered domain is overlapped by one of the domains $D_{9,10}$, $D_{9,11}$ or $D_{9,12}$. In these situations, the switch at position 6 in $V_{9,11}$ stems from position 3 of the domain that overlap the considered domain (Figures~\ref{fig:t9t11-cd(a)},~\ref{fig:t9t11-cd(b)} and~~\ref{fig:t9t11-cd(c)}, respectively).

Next, let us assume that $y_1=0$, then the considered domain overlaps another $D_{9,11}$ in such a way that $|x_{-3}x_{-2}x_{-1}x_{0}x_{1}x_{2}x_{3}|=|1110111|$ (Table~\ref{tab}).
Also then the number of switches in the considered $V_{9,11}$ does not increase, since the switch at position 2 stems from the domain being overlapped, as we have shown during the analysis of $D_{9,10}$. 

\begin{lem}\label{lem:t9t11}
The action of the pair $T_{9,11}$ does not produce new switches. 
\end{lem}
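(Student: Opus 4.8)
The plan is to fix the domain $|x_1x_2x_3x_4x_5x_6x_7| = |1110111| = D_{9,11}$, in which $x_2$ and $x_5$ flip to $y_2=y_5=0$, and then to verify case by case that every switch appearing in the image $V_{9,11}$ can be identified with a switch already present in $\bx$ (in $D_{9,11}$ itself or in an overlapping domain), so that none is created. First I would record the switches internal to $D_{9,11}$: since $x_3x_4=10$ and $x_4x_5=01$ there is an r-switch at position $3$ (marked $*$) and an r-switch at position $4$ (marked $\diamond$), whereas the equal pairs $x_1x_2$, $x_2x_3$, $x_5x_6$, $x_6x_7$ carry none. By Table~\ref{tab-2} the image has the determined core $y_2y_3y_4y_5=0100$, leaving only $y_1$, $y_6$ and $y_7$ undetermined; hence the full switch content of $V_{9,11}$ is governed by these three boundary values, and the argument reduces to a finite case split on them.

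Next I would treat $y_1=1$. Then $y_2y_3=01$ is preceded by $1$ and followed by $y_4y_5=00$, so it is a box and yields a b-switch at position $1$; this b-switch is precisely the r-switch from position $3$ of $D_{9,11}$ displaced to the left and converted into a box-switch (so $*$ is accounted for). It then remains to examine the right end according to $y_6y_7$. For $y_6y_7\in\{11,01,00\}$ the segment $y_5y_6y_7$ contains at most one further switch, which in each subcase is simply the $\diamond$ switch having moved to the right or vanished (Figures~\ref{fig:t9t11(a)}--\ref{fig:t9t11(c)}); in none of them is a new switch produced.

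The genuinely delicate situations are the subcase $y_6y_7=10$ and the remaining case $y_1=0$, and I expect the overlap bookkeeping to be the main obstacle. When $y_6y_7=10$, the flip $x_7=1\mapsto y_7=0$ can only be produced by an occurrence of $T_9$, which forces the considered domain to be overlapped on the right by one of $D_{9,10}$, $D_{9,11}$ or $D_{9,12}$; the apparent switch at position $6$ of $V_{9,11}$ is then not new but inherited from position $3$ of the overlapping domain (Figures~\ref{fig:t9t11-cd(a)}--\ref{fig:t9t11-cd(c)}). Symmetrically, when $y_1=0$ the flip $x_1=1\mapsto y_1=0$ forces an overlap on the left by another copy of $D_{9,11}$ with $|x_{-3}\ldots x_3|=|1110111|$, and the switch at position $2$ of $V_{9,11}$ is inherited from that overlapping domain, exactly as already established in the analysis of $D_{9,10}$ (Lemma~\ref{lem:t9t10}). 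The crux is to check, in each overlap configuration, that every switch of $V_{9,11}$ is charged to a distinct switch of $\bx$ with neither double-counting nor omission; once this attribution is made consistent with the displacements recorded for the previously treated pairs, the claim that $T_{9,11}$ creates no new switch follows.
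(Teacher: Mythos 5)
Your proposal takes essentially the same route as the paper's own proof: the identical case split on $y_1$, the identical subcases $y_6y_7\in\{11,01,00\}$ versus $y_6y_7=10$, and the identical overlap arguments (a right overlap by $D_{9,10}$, $D_{9,11}$ or $D_{9,12}$ forced by $T_9$ when $y_6y_7=10$, and a left overlap by another copy of $D_{9,11}$ when $y_1=0$, settled by the earlier $D_{9,10}$ analysis). Since every switch of $V_{9,11}$ is attributed to the switches at positions $3$ and $4$ of $D_{9,11}$ or to switches of the overlapping domains, exactly as in the paper's figures, the argument is correct and essentially identical.
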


\subsection{The pair $T_{9,12}$}

To describe the effect of the pair $T_{9,12}$ on the switches in $D_{9,12}=|x_1x_{2}x_{3}x_{4}x_{5}x_{6}x_{7}|$$=|1110110|$, we need to consider two variants of this domain:  $D_{9,12}^r=|1110110|$ with $x_{8}x_{9}x_{10}\neq 100$ and $D_{9,12}^b=|1110110100|$. 

\begin{figure}[h!b]
    \centering
    \begin{subfigure}{0.46\textwidth}
{\scriptsize\[
\xymatrix@C=0.2cm@R=0.5cm{ 
\bx = \cdots \;\lllp  1{\boldsymbol{11}}\gn{*}0\gn{\circ}{\boldsymbol{11}}\gn{\diamond} \ar[d]  0\,\lllp\,\cdots\ar[d]  \\
\by = \cdots \;\lllp1\gn{*}{\boldsymbol{00}}0{\boldsymbol{00}} 0\,\lllp\,\cdots 
}
\]}
    \caption{}
    \label{fig:t9t12(a)}
\end{subfigure}
\hfill
\begin{subfigure}{0.45\textwidth}
{\scriptsize\[
\xymatrix@C=0.2cm@R=0.5cm{ 
\bx = \cdots \;\lllp 1{\boldsymbol{11}}\gn{*}0\gn{\circ}{\boldsymbol{11}}\gn{\diamond} \ar[d]  0\,\lllp\,\cdots\ar[d]  \\
\by = \cdots \;\lllp0{\boldsymbol{00}}0{\boldsymbol{00}} 0\,\lllp\,\cdots 
}
\]}
    \caption{}
    \label{fig:t9t12(b)} 
\end{subfigure}
\caption{The effect of $T_{9,12}$ on the switches in $D_{9,12}^r$:
        (a) if $y_1=1$, (b) if $y_1=0$.}
\end{figure}
\begin{figure}[h!]
    \centering
    \begin{subfigure}{0.46\textwidth}
{\scriptsize\[
\xymatrix@C=0.2cm@R=0.5cm{ 
\bx = \cdots \;\lllp  111\gn{*}0\gn{\circ}{\boldsymbol{11}} \ar[d]  \gn{\diamond}\fcolorbox{gray}{white}{01}00\,\lllp\,\cdots\ar[d]  \\
\by = \cdots \;\lllp1\gn{*}000{\boldsymbol{00}}0  \gn{\circ}1\gn{\diamond}00\,\lllp\,\cdots 
}
\]}
    \caption{}
    \label{fig:t9t12-cd(a)}
\end{subfigure}
\hfill
\begin{subfigure}{0.45\textwidth}
{\scriptsize\[
\xymatrix@C=0.2cm@R=0.5cm{ 
\bx =\cdots \;\lllp  {111}\gn{*}0\gn{\circ}\boldsymbol{11} \ar[d]  \gn{\diamond}\fcolorbox{gray}{white}{01}00\,\lllp\,\cdots\ar[d]  \\
\by = \cdots \;\lllp0000\boldsymbol{00}0   \gn{\circ}1\gn{\diamond}00\,\lllp\,\cdots 
}
\]}
    \caption{}
    \label{fig:t9t12-cd(b)} 
\end{subfigure}
\caption{The effect of $T_{9,12}$ on the switches in $D_{9,12}^b$: 
        (a) if $y_1=1$, (b) if $y_1=0$.}
\end{figure}

In $D_{9,12}^r=|1110110|$ there are three r-switches at positions 3, 4 and 6. The latter two disappear and the first one moves two cells to the left if $y_1=1$, (Figure~\ref{fig:t9t12(a)}), while if $y_1=0$, then it also disappears (Figure ~\ref{fig:t9t12(b)}). The latter is due to the fact that if $x_1=1$ and $y_1=0$, then it must be the result of $T_{11}$, and so the considered domain overlaps with $D_{9,11}$ in such a way that $|x_{-3}x_{-2}x_{-1}x_{0}x_{1}x_{2}x_{3}|=|1110111|$ (Table~\ref{tab}) and we have explained all the switches that appear in $V_{9,11}$ (and none of them come from the right). 

In $D_{9,12}^b=|1110110100|$ there are two r-switches at positions 3 and 4 and one b-switch at position 6. The latter two move to the right and the b-switch turns into an r-switch. As for the first one, it moves to the left: if $y_1=1$, then two cells (Figure~\ref{fig:t9t12-cd(a)}), while if $y_1=0$, then more than two cells, thus landing in front of $V_{9,12}^r$ or disappearing (Figure ~\ref{fig:t9t12-cd(b)}).

\begin{lem}\label{lem:t9t12}
The action of the pair $T_{9,12}$ does not produce new switches. Moreover, the action of $T_{9,12}$ on $D_{9,12}^r$ leads to a reduction in the number of switches.
\end{lem}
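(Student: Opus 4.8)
The plan is to prove both assertions by the same marker-tracking method already used for the earlier pairs: compare, switch by switch, the domain $D_{9,12}$ with its image $V_{9,12}$, and verify that every switch appearing in $V_{9,12}$ is the image of a switch of $D_{9,12}$ that has merely moved (so that no new switch is born), together with a strict drop in the count on the regular variant. First I would fix what $T_{9,12}$ actually rewrites: by Table~\ref{tab-2} the block $|1110110|$ maps to $|{?}000000|$, so only positions $2,\dots,6$ can flip and $y_1$ is the sole unresolved entry. Hence the affected switches are confined to this block, and the analysis splits according to the right context $x_8x_9x_{10}$ (which decides the type of the switch near position $6$) and the left value $y_1$ (fixed by the cell just left of the block).

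For the regular variant $D_{9,12}^r=|1110110|$ with $x_8x_9x_{10}\neq 100$ I would read off, directly from Definition~\ref{def:switch}, that there are exactly three r-switches, at positions $3$, $4$ and $6$; in particular $x_4x_5=01$ is not a box, since it is followed by $x_6x_7=11\neq 00$. Using Figure~\ref{fig:t9t12(a)} and Figure~\ref{fig:t9t12(b)}, the switches at positions $4$ and $6$ always vanish, while the one at position $3$ migrates two cells to the left when $y_1=1$ and also disappears when $y_1=0$. Either way the image holds at most one switch, no switch is created, and the total drops by at least two, giving the second (strict-reduction) claim.

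For the box variant $D_{9,12}^b=|1110110100|$ the pattern $x_7x_8=01$ is a box, so the three switches are two r-switches at positions $3,4$ and one b-switch at position $6$. Following Figure~\ref{fig:t9t12-cd(a)} and Figure~\ref{fig:t9t12-cd(b)}, the switch at position $4$ and the box-switch at position $6$ both move to the right (the latter turning regular), exactly as in the analogous right shift seen for $T_{5,6}$ and $T_{7,8}$, while the switch at position $3$ moves left. Here the count is merely preserved, so this variant yields no reduction but still produces no new switch, matching the asymmetric formulation of the lemma.

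The step I expect to be the real obstacle is the sub-case $y_1=0$, where the left boundary of the block is shared with a neighbour. Since $x_1=1$ while $y_1=0$, the flip at position $1$ can only have come from $T_{11}$, forcing an overlap with a $D_{9,11}=|1110111|$ sitting at $|x_{-3}\cdots x_3|$. The leftmost switch of $D_{9,12}$ then does not simply shift two cells left; its destination is dictated by the image of that overlapping $D_{9,11}$, whose switch accounting is already settled in Lemma~\ref{lem:t9t11}. The difficulty is purely one of bookkeeping consistency: I must verify that the switch I describe as ``disappearing'' from position $3$ is the very switch that the $D_{9,11}$ analysis relocates (and that Lemma~\ref{lem:t9t11} guarantees is not multiplied), so that it is neither lost nor counted twice, and symmetrically that in the $y_1=0$, $D_{9,12}^r$ case no switch enters the block from the right. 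Once this hand-off between overlapping domains is pinned down, both assertions follow by tallying the markers in the four figures.
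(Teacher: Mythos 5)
Your proposal is correct and follows essentially the same route as the paper's own argument: the same split into $D_{9,12}^r$ and $D_{9,12}^b$, the same tracking of the three switches at positions 3, 4, 6, and the same resolution of the $y_1=0$ case via the forced overlap with $D_{9,11}$ (through $T_{11}$), whose already-established switch accounting guarantees that nothing is double-counted and no switch enters from the right. Two harmless slips: in $D_{9,12}^r$ the box test fails because $x_6x_7=10\neq 00$ (not $11$), and in the $D_{9,12}^b$ case with $y_1=0$ the leftmost switch may actually disappear, so the count there is not always merely preserved---neither affects the validity of the lemma.
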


\bigskip
The results of this section can be summarized as follows.

\begin{prop}\label{thm:c}
Let $\bx\in \xodd$. Then $s(F(\bx))\leq s(\bx)$, which means that the sequence $(s(\bx^t))^\infty_{t=0}$ is monotonically decreasing. Moreover, if $\bx$ contains $D_{5,6}^r$, $D_{7,8}^r$, $D_{9,10}^r$, $D_{9,12}^r$ or two blocks of 1s merge according to $T_{1,2}$ or $T_{3,4}$, then $s(F(\bx))<s(\bx)$.
\end{prop}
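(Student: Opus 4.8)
The plan is to assemble the seven per-pair lemmas (Lemmas~\ref{lem:t1t2}--\ref{lem:t9t12}) into a single statement about one application of $F$. The central object to construct is an \emph{injective} map $\phi$ from the set of switches of $\by=F(\bx)$ into the set of switches of $\bx$; once such a map exists, $s(F(\bx))\le s(\bx)$ is immediate, and the strict-inequality clause follows by exhibiting, in the relevant cases, a switch of $\bx$ lying outside the image of $\phi$.

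First I would set up the bookkeeping. By Theorem~\ref{thm:BFO2} we may assume $\bx$ is non-homogeneous, as otherwise both sides vanish. A cell $i$ changes value under $F$ precisely when it is the central cell of one of the active transitions $T_1,\dots,T_{12}$, and every such change belongs to exactly one acting AT pair, whose explicit block is one of the domains $D_{1,2},\dots,D_{9,12}$ (in one of its $r$ or $b$ variants). On every maximal run of cells left unchanged, $\by$ and $\bx$ agree, so the switches there are literally identical and I send each to itself under $\phi$. The subsection analyses, read off from the markers $*,\circ,\prime,\diamond,\flat$, then specify, locally for each acting pair and each overlap configuration, where every switch of the corresponding image $V$ originates in its domain $D$; this defines $\phi$ on the switches lying inside the images.

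The main work is to verify that these local prescriptions glue into a globally well-defined injection, which I would reduce to three checks. \emph{Coverage}: every switch of $\by$ lies either in an unchanged run or inside some image block $V$, so $\phi$ is defined on all of them; this uses that a switch (regular or block) can only be created, moved, or destroyed next to a cell that changed value. \emph{No new switches}: the first sentence of each of Lemmas~\ref{lem:t1t2}--\ref{lem:t9t12} guarantees that inside each $V$ every switch is the image of a switch of the corresponding $D$, so $\phi$ indeed lands among the switches of $\bx$. \emph{Injectivity}: no switch of $\bx$ is assigned to two different switches of $\by$; this is precisely the content of the overlap analyses already carried out (for instance $D_{1,2}$ or $D_{3,4}$ overlapped by $D_{5,6}$, $D_{7,8}$, or by a $D_{9,\bullet}$, and a $D_{9,11}$ overlapping another $D_{9,11}$), where the incoming markers $\diamond$ and $\flat$ pin down which neighbouring domain owns each shared switch.

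Finally, the strict-inequality clause uses the same map but observes that it is not surjective in the listed situations. Lemmas~\ref{lem:t5t6}, \ref{lem:t7t8}, \ref{lem:t9t10}, and \ref{lem:t9t12} state that acting on $D_{5,6}^r$, $D_{7,8}^r$, $D_{9,10}^r$, and $D_{9,12}^r$ strictly reduces the number of switches, while Lemmas~\ref{lem:t1t2} and~\ref{lem:t3t4} give the same whenever two blocks of $1$s merge; in each of these local pictures the domain carries a switch with no pre-image in $V$, so $\phi$ misses at least one switch of $\bx$ and hence $s(F(\bx))<s(\bx)$. I expect the genuinely delicate point to be injectivity: one must make sure the overlap patterns treated in the subsections are \emph{exhaustive}, i.e.\ that no unconsidered way for two acting domains to share or interleave cells can force a single switch of $\bx$ to be counted twice --- establishing that the explicit blocks listed in Table~\ref{tab} are the only ones whose occurrences can abut in this way is the crux of the argument.
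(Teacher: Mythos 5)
Your proposal is correct and takes essentially the same route as the paper: the paper establishes Proposition~\ref{thm:c} simply by aggregating Lemmas~\ref{lem:t1t2}--\ref{lem:t9t12}, whose marker-tracking case analyses (including the overlap cases) are exactly the local origin map you call $\phi$, and the strict-decrease clause is read off from the same lemmas. Your explicit three-part gluing argument (coverage, no new switches, injectivity) merely makes formal the bookkeeping that the paper leaves implicit in its figures and summary sentence.
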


\section{The main theorem}
\label{sec:main_theorem}
In this section we prove that the BFO rule solves the parity problem by showing that it transforms any cyclic configuration of odd length into a homogeneous configuration after finitely many time steps (it suffices to prove this because the BFO rule  preserves parity). 

We start by proving some facts about configurations for which the number of switches remains constant during the entire evolution of the system. We denote the set of all such configurations by $\xconst$, \emph{i.e.} 
\[
\xconst = \{ \bx\in \xodd \mid {\rm the\;  sequence\;} \big(s(F^t(\bx))\big)^\infty_{t=0}\; {\rm is\; constant} \}.
\]
Therefore, if $\bx\in \xconst$, then 
\[
s(\bx)=s(F(\bx))=s(F^2(\bx))=s(F^3(\bx))=\ldots\,,
\]
thus also $F^t(\bx)\in \xconst$ for any $t\geq 0$. Of course, all homogeneous configurations from $\xodd$, like $0$, $1$, $000$, $111$, $00000$, $11111$ and so on, belong to $\xconst$. We will show that $\xconst$ does not contain any other configurations, and for this purpose we will rely upon the tool below.

\begin{defn}\label{def:OB}
    Let $\bx\in\xodd$. The block $x_{i}x_{i+1}\ldots x_{i+2k+1}$ of $\bx$, for some $k \in \mathbb{Z}_+$, will be referred to as \emph{an ordered block} if it satisfies the following three conditions:
\begin{itemize}
\itemindent=5pt
    \item[{\rm (C1)}] All two-symbol blocks $x_{i}x_{i+1}$, $x_{i+2}x_{i+3}$, ..., $x_{i+2k}x_{i+2k+1}$ belong to $\{00, 01,11\}$.
    \item[{\rm (C2)}] $x_{i}x_{i+1}=01$ and $x_{i+2k}x_{i+2k+1}\neq 01$.
    \item[{\rm (C3)}] If $x_{i+2k}x_{i+2k+1}=11$, then $x_{i+2k+2}=0$.
\end{itemize}
Furthermore, we say that an ordered block is \emph{maximal} if it is not a proper sub-block of another ordered block.
\end{defn}
In words, a block $x_{i}x_{i+1}\ldots x_{i+2k+1}$ is ordered if {\rm (C1)} it is formed by a concatenation of only the symbol pairs $00$, $01$ and $11$, {\rm (C2)} it begins with and does not end with $01$, and {\rm (C3)} if it ends with $11$, then it has to be followed by $0$. Note that condition {\rm (C2)} guarantees that $k\geq 1$, which means that an ordered block has a length of at least~4. 
Figure~\ref{fig:ordered} depicts sample configurations from $\xodd$ with some ordered blocks marked in gray.

\begin{figure}[h]
   \[
    \xymatrix@C=0.2cm@R=-0.03cm{ 
\colorbox{gray}{11101010100010100}10\colorbox{gray}{010000}110\colorbox{gray}{011}\\
\colorbox{gray}{111}0101010001010\colorbox{gray}{0100}10000110\colorbox{gray}{011}\\
0\colorbox{gray}{011111001101000011110111}0101010101010110  \\
\colorbox{gray}{0100}1110000000000000000001011\colorbox{gray}{0100000101011111}0\\
00011110011000000111111111111
}\]
    \caption{Sample configurations from $\xodd$ with selected ordered blocks marked in gray. Note that the last one does not contain any ordered block at all.}
    \label{fig:ordered}
\end{figure}

Two points are worth noting. First, ordered blocks occurring in a given configuration do not have to be disjoint, as the first two configurations in Figure~\ref{fig:ordered} illustrate, since they are actually the same; however, this fact can be ignored as it does not affect our considerations. Second, an ordered block cannot be too long compared to the length of the configuration, as stated below.
\begin{lem}\label{lem:OB}
An ordered block in $\bx\in X_{2n-1}$, $n \in \mathbb{N}$, may have a length of at most $2n$.
\end{lem}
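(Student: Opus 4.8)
The plan is to argue by contradiction: assume there is an ordered block $x_i x_{i+1}\cdots x_{i+2k+1}$ of length $2k+2 > 2n$, i.e.\ $k\ge n$, inside a configuration of length $N:=2n-1$. The first device I would introduce is the $N$-periodic extension $c_m:=x_{(i+m)\bmod N}$, so that the block becomes the window $c_0c_1\cdots c_{2k+1}$, its constituent two-symbol blocks are the pairs $(c_{2j},c_{2j+1})$, and, crucially, $c_m=c_{m+N}$ for every $m$. In this language, {\rm (C1)} says precisely that no \emph{descent} (a $1$ immediately followed by a $0$) occurs at an even offset; {\rm (C2)} says $c_0c_1=01$ and $c_{2k}c_{2k+1}\ne 01$; and {\rm (C3)} says that if $c_{2k}c_{2k+1}=11$ then $c_{2k+2}=0$. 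The single structural fact I would exploit repeatedly is that, because $N$ is \emph{odd}, the two offsets $m$ and $m+N$ carrying the same physical cell have \emph{opposite} parity.

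The core step is to show that no descent can sit at any offset $m\in\{0,1,\dots,a-1\}$, where $a:=2k+1-N=2(k-n+1)\ge 2$. Indeed, a descent at such an $m$ would, by periodicity, be duplicated at offset $m+N\le 2k$; since $m$ and $m+N$ have opposite parity, one of them is even and hence forms a forbidden $10$-pair, contradicting {\rm (C1)}. Consequently $c_0\le c_1\le\cdots\le c_a$, and since $c_0c_1=01$ this forces $c_1=c_2=\cdots=c_a=1$; in particular $c_a=1$.

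To close, I would reflect the final pair back to the front via periodicity: from $2k=(a-1)+N$ and $2k+1=a+N$ we get $c_{2k}=c_{a-1}$ and $c_{2k+1}=c_a$, so the last pair equals $(c_{a-1},c_a)=(c_{a-1},1)$. By {\rm (C1)} together with {\rm (C2)} this pair must be $11$, hence $c_{a-1}=1$ and {\rm (C3)} applies, giving $c_{2k+2}=0$; but $c_{2k+2}=c_{a+1}$, so the pair $(c_a,c_{a+1})$ sitting at the even offset $a$ equals $10$, again contradicting {\rm (C1)}. This contradiction shows $k\le n-1$, i.e.\ the block has length at most $2n$.

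The step I expect to be the genuine obstacle is the sharpening from the easy bound. Using {\rm (C1)} alone — every physical descent may occur at most once in the window — only limits the length to roughly $2N$, far from $2n$. The decisive point is that {\rm (C2)}/{\rm (C3)} must be brought in, and the cleanest route I see is the periodic reflection that pins the last pair to offsets $(a-1,a)$: this is what lets the established fact $c_a=1$ interact with the stopping conditions to manufacture a forbidden $10$. The pitfall to guard against is the case in which the last pair is $00$ rather than $11$; proving $c_a=1$ \emph{first} dissolves that case, since it rules out $00$ outright.
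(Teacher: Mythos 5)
Your proof is correct and rests on the same mechanism as the paper's own argument: because the cycle length $N=2n-1$ is odd, wrapping shifts the parity of the pair boundaries, so any descent $10$ in the overlap region would surface as a constituent pair forbidden by {\rm (C1)}, forcing a run of $1$s after the initial $01$, and {\rm (C3)} then rules out the block ending inside that run. The paper packages this by locating the first $0$ after the start and case-splitting on where the block ends, whereas you prove a uniform no-descent claim and reflect the final pair back into the prefix --- a tidier organization of essentially the same argument.
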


\begin{proof}
Let $\bx\in X_{2n-1}$ for some $n \in \mathbb{N}$ and suppose, conversely, that $\bx = x_0x_1\ldots x_{2n-2}$ contains an ordered block $b$ having a length of at least $2n+2$. Without loss of generality, we may assume that $b$ started at position $0$, so
\[
b = \;\vline 01\vline x_2x_3\vline\;\ldots\;\vline x_{2n-2}0\vline 1x_2\vline \;\ldots\;\vline x_{2m-1}x_{2m}\vline \; ,
\]
with $m\geq 1$ (we have added vertical lines only to emphasize the boundaries between two-symbol blocks forming $b$).

Note that $x_{2n-2} = 0$, since no two-symbol block in $b$ can equal $10$ (see condition {\rm (C1)} in Definition~\ref{def:OB}). If so, we can define $k$ as the minimal number in the set $\big\{ i\in\{2,3,\ldots,2n-2\}\mid x_i=0\big\}$, as this set is not empty. This means that $x_1=\ldots=x_{k-1}=1$ and $x_k=0$, which entails that $k$ must be even (otherwise, again, some two-symbol block would be $10$). But then
\[
\vline 1x_2\vline \;\ldots\;\vline x_{k-1}x_{k}\vline = \;\vline 11\vline  11\vline \;\ldots\;\vline  11\vline 10\vline \; ,
\]
which shows that $2m$ cannot be greater than or equal to $k$ (as then $b$ would contain $10$), but also $2m$ cannot be smaller than $k$, as no ordered block can end with $11$ not followed by $0$ (see condition {\rm (C3)} in Definition~\ref{def:OB}). The obtained contradiction finishes the proof.
\end{proof}

Next, all the information about $\xconst$ obtained in the previous section is grouped together, with additional explanation of what it implies for ordered blocks.

\begin{prop}\label{thm:5-12}
    Let $\bx\in \xconst$. Then $\bx$ does not contain $D_{5,6}^r$, $D_{7,8}$, $D_{9,10}^r$ or $D_{9,12}^r$ and the action of $T_{1,2}$ and $T_{3,4}$ cannot lead to the merging of two blocks of 1s. Moreover, if $b=x_{i}x_{i+1}\ldots x_{i+2k+1}$ is an ordered block in $\bx$ and $\by=F(\bx)$, then there is an ordered block in $\by$ containing $y_{i}y_{i+1}\ldots y_{i+2k+1}$.
\end{prop}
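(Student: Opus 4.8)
The plan is to treat the two assertions of the statement separately, relying throughout on Proposition~\ref{thm:c} and on the switch-tracking Lemmas~\ref{lem:t1t2}--\ref{lem:t9t12} of Section~\ref{sec:dyn_switches}.

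For the list of forbidden patterns I would argue by contradiction from the constancy of $\big(s(F^t(\bx))\big)_{t=0}^\infty$. If $\bx$ contained any of $D_{5,6}^r$, $D_{7,8}^r$, $D_{9,10}^r$ or $D_{9,12}^r$, or if $T_{1,2}$ or $T_{3,4}$ merged two blocks of $1$s, then Proposition~\ref{thm:c} would yield $s(F(\bx))<s(\bx)$, contradicting $\bx\in\xconst$. The only remaining case is $D_{7,8}^b$: here Lemma~\ref{lem:t7t8} guarantees that $D_{5,6}^r$ is created in $F(\bx)$, and since $F(\bx)\in\xconst$ as well, Proposition~\ref{thm:c} then gives $s(F^2(\bx))<s(F(\bx))$, again a contradiction. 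As forbidding both $D_{7,8}^r$ and $D_{7,8}^b$ forbids $D_{7,8}$, this settles the first sentence.

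The core of the proof is the preservation of ordered blocks, and the observation I would exploit is that, once the patterns above are excluded, the dynamics on $\xconst$ becomes very rigid: the only actions still possible are $T_{1,2}$ and $T_{3,4}$ without merging, $T_{5,6}$ on $D_{5,6}^b$, $T_{9,10}$ on $D_{9,10}^b$ and $D_{9,10}^{rb}$, $T_{9,11}$, and $T_{9,12}$ on $D_{9,12}^b$. Reading off the figures of Section~\ref{sec:dyn_switches}, each of these neither creates nor destroys a switch and displaces every switch it touches by an even number of cells (two, in the propagating cases), possibly interchanging the roles of b-switches and r-switches. I would then reformulate the notion of an ordered block as a parity statement: measuring offsets from $i$, every $0\to1$ transition of $b$ sits at an even offset and every $1\to0$ transition at an odd offset, and the $01$ inside any box is itself an aligned (even-offset) pair, so boxes are compatible with this alignment. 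Because all admissible actions shift switches by an even amount, this parity is preserved, which forces each image pair $y_{i+2j}y_{i+2j+1}$ to differ from $10$; this is precisely condition~{\rm (C1)} for the positions $y_i\ldots y_{i+2k+1}$.

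With {\rm (C1)} secured for these positions, the remaining task is to exhibit the ordered block of $\by$ that contains them, which I would do by examining the two boundaries. At the left end the starting pair $x_ix_{i+1}=01$ may be shifted or absorbed by a $0$ arriving from the left, so I would determine which admissible AT acts around positions $i-1,i$ and verify that $\by$ still carries a correctly aligned $01$ at or before position $i$, fixing the left boundary and condition~{\rm (C2)}. At the right end I would inspect the terminal pair $x_{i+2k}x_{i+2k+1}\in\{00,11\}$ together with condition~{\rm (C3)}: when it equals $11$, the mandatory following $0$ must survive, which is exactly where the overlap cases of the $T_{9,\cdot}$ actions (the configurations marked with $\diamond$ and $\flat$ in the figures) have to be checked one by one. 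I expect this boundary analysis to be the main obstacle, since an ordered block may grow or shrink at its ends and overlapping domains couple it to its immediate surroundings, whereas the interior is routine once parity preservation is in place. Concluding that the image positions lie inside an ordered block of $\by$ then finishes the argument.
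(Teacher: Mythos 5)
Your first paragraph is correct and is essentially the paper's own argument: the listed domains and the merging of blocks of 1s are excluded directly by Proposition~\ref{thm:c}, and $D_{7,8}^b$ is excluded because Lemma~\ref{lem:t7t8} would put $D_{5,6}^r$ into $F(\bx)\in\xconst$, forcing $s(F^2(\bx))<s(F(\bx))$.

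The ordered-block part, however, has a genuine gap, located exactly at the step you use to bypass case analysis. First, your parity reformulation of ordered blocks is incorrect: condition {\rm (C1)} forbids $10$ only as an \emph{aligned} pair, so it forces every $1\to 0$ transition to an odd offset, but it does \emph{not} force $0\to 1$ transitions to even offsets --- the block $010011$ followed by $0$ is a valid ordered block with a rising edge at the odd offset $3$. Second, and more seriously, the premise that every admissible action ``displaces every switch it touches by an even number of cells'' is false, by the paper's own figures for domains you yourself list as admissible: in Figure~\ref{fig:t5t6(b)} ($T_{5,6}$ acting on $D_{5,6}^b$) the r-switch marked $*$ moves from position $1$ to position $4$, i.e.\ three cells, and in Figure~\ref{fig:t9t11(a)} ($T_{9,11}$) the r-switch marked $\diamond$ moves from position $4$ to position $5$, i.e.\ one cell. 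Third, even the correct invariant --- all falling edges inside the block at odd offsets --- cannot be obtained by tracking displacements of existing switches, because admissible actions create falling edges that did not exist before ($T_{9,11}$ turns $x_1x_2=11$ into $y_1y_2=10$, an edge hidden inside a box and hence invisible to the switch count); to show such new edges land at odd offsets one must analyse how each admissible domain of Table~\ref{tab-3} can align with the pair structure of $b$ and use what that table says precedes or follows it, which is precisely the paper's case analysis (each way a pair $x_{i+2m}x_{i+2m+1}\in\{00,11,01\}$ could become $10$ is refuted against conditions {\rm (C1)}--{\rm (C3)} of $b$ in $\bx$) --- the work your parity shortcut was meant to avoid reappears in full. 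Finally, the boundary conditions {\rm (C2)} and {\rm (C3)} are only announced as a plan; in the paper they need specific arguments (a change of $x_ix_{i+1}=01$ into $00$ can only come from $T_{10}$ or $T_{11}$, forcing $y_{i-2}y_{i-1}y_iy_{i+1}=0100$; the tail is handled via $T_9$ and $T_1$/$T_3$), so as submitted the proof would be incomplete even if the interior argument were sound.
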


\begin{proof}
The fact that $\bx$ does not contain $D_{5,6}^r$, $D_{7,8}^r$, $D_{9,10}^r$ or $D_{9,12}^r$ and that the action of $T_{1,2}$ and $T_{3,4}$ cannot lead to the merging of two blocks of 1s is obvious from Proposition~\ref{thm:c}. 
Moreover, if $\bx$ contains $D_{7,8}^b$ then, according to Lemma~\ref{lem:t7t8}, $F(\bx)$ contains $D_{5,6}^r$, thus implying that $s(F^2(\bx))<s(F(\bx))$, which is impossible since $F(\bx)\in \xconst$.

For proving the second statement, it suffices to look at Table~\ref{tab-3}, which presents a summary of the above considerations, namely, the only domains that can possibly occur in a configuration belonging to $\xconst$ along with their images.

\begin{table}[h]
	\centering
	\begin{tabular}{c}
{
\xymatrix@C=0.3cm@R=0.5cm{ 
D_{1,2} & D_{3,4} & D_{5,6}^b & D_{9,10}^b  \\
|\textcolor{gray}{ 111}\bf{\ensuremath\overset{{1}}{0}\ensuremath\overset{2}{0}}|\phantom{0} \ar[d] & |\textcolor{gray}{ 001}\bf{\overset{3}{0}\overset{4}{0}}|\phantom{0} \ar[d] & |\textcolor{gray}{ 0}{\bf\overset{5}{1}\overset{6}{1}}\textcolor{gray}{ 0100}| \ar[d] & |\textcolor{gray}{ 1}{\bf\overset{9}{1}}\textcolor{gray}{ 10}{\bf\overset{1\!0}{1}}\textcolor{gray}{ 00}|\ar[d]    \\
|\textcolor{gray}{?11}{\bf11}|\textcolor{gray}{ 0} & |\textcolor{gray}{??1}{\bf11}|\textcolor{gray}{ 0}          &   |\textcolor{gray}{?}{\bf{00}}\textcolor{gray}{0100}|          & |\textcolor{gray}{?}{\bf{0}}\textcolor{gray}{10}{\bf{0}}\textcolor{gray}{00}|   
}}
\\\\
{
\xymatrix@C=0.3cm@R=0.5cm{ 
 D_{9,10}^{rb} & D_{9,11} & D_{9,12}^b \\
           |\textcolor{gray}{1}{\bf\overset{{9}}{1}}\textcolor{gray}{{10}}{\bf\overset{1\!0}{1}}\textcolor{gray}{ 0100}|\ar[d] & |\textcolor{gray}{ 1}{\bf\overset{9}{1}}\textcolor{gray}{ 10}{\bf\overset{1\!1}{1}}\textcolor{gray}{ 11}|\ar[d]  & |\textcolor{gray}{ 1}{\bf\overset{9}{1}}{\bf\overset{1\!2}{1}}\textcolor{gray}{ 0}{\bf\overset{5}{1}\overset{6}{1}}\textcolor{gray}{0100}|\ar[d]   \\
 |\textcolor{gray}{?}{\bf{0}}\textcolor{gray}{10}{\bf{0}}\textcolor{gray}{0100}| & |\textcolor{gray}{?}{\bf{0}}\textcolor{gray}{10}{\bf{0}}\textcolor{gray}{??}| & |\textcolor{gray}?{\bf{00}}\textcolor{gray}0{\bf{00}}\textcolor{gray}{0100}| 
}}
\end{tabular}
\caption{The only domains that can possibly occur in a configuration belonging to $\xconst$ along with their images. The numbers in boldface shown on top of each domain are the AT numbers that act at those locations.}\label{tab-3}
\end{table}

Indeed, let $b=x_{i}x_{i+1}\ldots x_{i+2k+1}$ be an ordered block in $\bx\in \xconst$ and let $\by=F(\bx)$.
In order to prove that there is an ordered block in $\by$ containing $y_{i}y_{i+1}\ldots y_{i+2k+1}$, let us use Table~\ref{tab-3} to demonstrate that:
\begin{itemize}
    \item[(i)] Each two-symbol block $y_{i}y_{i+1}$, $y_{i+2}y_{i+3}$, ..., $y_{i+2k}y_{i+2k+1}$ belongs to $\{00, 01,11\}$.
    \item[(ii)] If the first two-symbol block $y_iy_{i+1}$ is not $01$, then it is preceded by $01$.
    \item[(iii)] If the last two-symbol block $y_{i+2k}y_{i+2k+1}$ is equal to $01$, then it is followed by $00$, and if it is equal to $11$, then it is followed by $0$ or $110$.
\end{itemize}

For proving (i), let us choose any two-symbol block $y_{i+2m}y_{i+2m+1}$, $m\in\{0,1,\ldots,k\}$, and suppose that $y_{i+2m}y_{i+2m+1}=10$. Since $b$ is an ordered block, it holds that $x_{i+2m}x_{i+2m+1}\in\{00, 01,11\}$. We consider each of these cases separately.
\begin{itemize}
\item[(a)] Suppose that $x_{i+2m}x_{i+2m+1}=00$ and $y_{i+2m}y_{i+2m+1}=10$. If $x_{i+2m}=0$ and $y_{i+2m}=1$, then it must be the result of the active transition $T_2$ or $T_4$; moreover, the two-symbol block $x_{i+2m}x_{i+2m+1}=00$ is preceded by $10$ (see Table~\ref{tab-3}). This is a contradiction, since neither $x_{i+2m}x_{i+2m+1}=00$ can be the beginning of $b$ (see condition {\rm (C2)}) nor the two-symbol block $x_{i+2m-2}x_{i+2m-1}=10$ can be a part of $b$ (see condition {\rm (C1)}).
   
\item[(b)]  Suppose that $x_{i+2m}x_{i+2m+1}=11$ and $y_{i+2m}y_{i+2m+1}=10$. If $x_{i+2m+1}=1$ and $y_{i+2m+1}=0$, then it must be the result of the active transition $T_9$; moreover, the two-symbol block $x_{i+2m}x_{i+2m+1}=11$ is followed by $10$ (see Table~\ref{tab-3}). This is a contradiction, since neither $x_{i+2m}x_{i+2m+1}=11$ can be the end of $b$, since it is followed by 1 (following condition {\rm (C3)}) nor the two-symbol block $x_{i+2m+2}x_{i+2m+3}=10$ can be a part of $b$ (see condition {\rm (C1)}).
    
\item[(c)]  Suppose that $x_{i+2m}x_{i+2m+1}=01$ and $y_{i+2m}y_{i+2m+1}=10$. This means that two active transitions are at play, the first one having to be the result of $T_2$ or $T_4$, but this means that $x_{i+2m-1}=0$. If so, then the second active transition must be $T_5$, which requires that the two-symbol block $x_{i+2m}x_{i+2m+1}=11$ is followed by $10$ (see Table~\ref{tab-3}). Again, this is a contradiction, since neither $x_{i+2m}x_{i+2m+1}=01$ can be the end of $b$ (see condition {\rm (C2)}) nor the two-symbol block $x_{i+2m+2}x_{i+2m+3}=10$ can be a part of $b$ (see condition {\rm (C1)}). 
\end{itemize}

In conclusion, the three contradictions above prove that for any $m$, $y_{i+2m}y_{i+2m+1}$ cannot be equal to $10$. 

Carrying on, in order to prove (ii), let us assume that $y_iy_{i+1} \neq 01$. Note that $y_iy_{i+1}$ cannot be $11$, as this would require the action of the pair $T_{1,2}$ or $T_{3,4}$ that would entail the merging of two blocks of ones, which contradicts the fact that $\bx\in \xconst$ (see Proposition~\ref{thm:c}). Hence, it necessarily holds $y_iy_{i+1} = 00$. However, since we know that $x_{i+2}x_{i+3} \neq 10$, if $x_ix_{i+1} = 01$ and $y_iy_{i+1} = 00$, this must be the result of $T_{10}$ or $T_{11}$, but in both cases $y_{i-2}y_{i-1}y_iy_{i+1} = 0100$.

Finally, to prove (iii), let us consider the end of the ordered block~$b$. Note that if $x_{i+2k}x_{i+2k+1}=00$, then $y_{i+2k}y_{i+2k+1}$ cannot be $01$ (since there is no active transition realizing this); also, if $y_{i+2k}y_{i+2k+1} = 11$, then $y_{i+2k+2}\neq 1$, because otherwise there would be a merging of two blocks of ones (and, as we know, this cannot happen, because $\bx\in \xconst$). Now, if $x_{i+2k}x_{i+2k+1}=11$ (and $x_{i+2k+2}=0$), then if $y_{i+2k}y_{i+2k+1} = 01$, this must be the result of $T_9$, but this implies that $y_{i+2k+2}y_{i+2k+3} = 00$, which means that $y_{i+2k}y_{i+2k+1}y_{i+2k+2}y_{i+2k+3} = 0100$; alternatively, if $y_{i+2k}y_{i+2k+1} = 11$, then $y_{i+2k+2}=1$ only if it is the result of $T_1$ or $T_3$, but then $y_{i+2k+2}y_{i+2k+3}y_{i+2k+4} = 110$, which means that $y_{i+2k}y_{i+2k+1}y_{i+2k+2}$ $y_{i+2k+3}y_{i+2k+4} = 11110$.

\end{proof}

In order to proceed, we need to establish some facts regarding ordered blocks.

\begin{lem}\label{lem:alter}
If $\bx\in \xconst$, then $\bx$ does not contain $010101$.
\end{lem}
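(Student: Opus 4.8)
The plan is to argue by contradiction and to exploit the forbidden-domain characterization of $\xconst$ furnished by Proposition~\ref{thm:5-12}: a configuration in $\xconst$ can contain none of $D_{7,8}$, $D_{5,6}^r$, $D_{9,10}^r$, $D_{9,12}^r$. So the whole strategy is to assume that some $\bx\in\xconst$ contains $010101$ and to manufacture one of these forbidden domains out of it. Since $010101$ is an alternating pattern, the natural move is to look at where the alternation breaks down, \emph{i.e.}, at the boundary of the longest alternating stretch surrounding the occurrence.

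Concretely, I would extend the given occurrence of $010101$ to a maximal alternating block $B=x_0\ldots x_{L-1}$ (consecutive cells with $x_t\neq x_{t+1}$, not prolongable on either side). Because the grid has odd length it cannot be globally alternating, so $B$ is proper and, by maximality, the cell immediately to its left repeats the first state, $x_{-1}=x_0$; moreover $L\geq 6$ since $B$ contains $010101$. The argument then splits on the value of $x_0$. If $x_0=0$, the left flank immediately yields $x_{-1}x_0x_1x_2x_3x_4=001010=D_{7,8}$, a contradiction. If $x_0=1$, I first note that because $B$ begins with $1$ yet contains the pattern $010101$ (which begins with $0$), its length must satisfy $L\geq 7$, so that the cells $x_0\ldots x_6=1010101$ and the flank $x_{-1}=1$ all sit safely inside the picture.

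It then remains to inspect the cell $x_{-2}$ two positions to the left of $B$. If $x_{-2}=1$, the seven cells $x_{-2}\ldots x_4=1110101$ together with $x_5x_6=01\neq00$ form $D_{9,10}^r$; if $x_{-2}=0$, the four cells $x_{-2}\ldots x_1=0110$ together with $x_2x_3x_4=101\neq100$ form $D_{5,6}^r$. Either way Proposition~\ref{thm:5-12} is contradicted, and the lemma follows. It is worth stressing that only the \emph{left} boundary of $B$ is needed; the right boundary never enters the argument.

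The steps requiring care are not deep but must be made explicit. First, the reduction to a single forbidden domain hinges on correctly pinning down the $r$-variant in each subcase, which forces me to check the two or three cells trailing the candidate block (these are exactly the cells that distinguish the forbidden $r$-variants from the harmless $b$- and $rb$-variants that \emph{are} permitted in $\xconst$); this is where establishing $L\geq 7$ in the case $x_0=1$ is essential, since it guarantees that $x_5x_6=01$ lies inside $B$ rather than at an unknown flank (had $B$ ended earlier, $111010100=D_{9,10}^{rb}$ could appear instead, which is allowed). Second, I must guard against wrap-around in short grids, where the cells $x_{-2},x_{-1},\ldots,x_6$ need not be distinct; this is dispatched by the standard device from Section~\ref{sec:dyn_switches} of replacing $\bx$ by a concatenation $\bx^k$, which again lies in $\xconst$ and still contains $010101$. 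I expect the bookkeeping around these variant conditions to be the only real obstacle; the remainder is a short case split.
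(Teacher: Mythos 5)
Your proof is correct and takes essentially the same approach as the paper: the paper likewise uses the odd length to pick an occurrence of $010101$ whose left flank breaks the alternation (it chooses $i$ with $x_{i-2}x_{i-1}\neq 01$, which is exactly the left boundary of your maximal alternating block), and then runs the same case split on the one or two cells to the left to produce the same three forbidden domains $D_{7,8}$, $D_{5,6}^r$ and $D_{9,10}^r$, contradicting Proposition~\ref{thm:5-12}. Your additional bookkeeping (the bound $L\geq 7$, the explicit trailing-cell checks for the $r$-variants, and the $\bx^k$ device against wrap-around) merely makes explicit what the paper leaves implicit.
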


\begin{proof}
Instead, let us suppose that $\bx$ does contain $010101$. Since the length of the configuration $\bx$ is odd, there exists $i\in\G$ such that $x_ix_{i+1}x_{i+2}x_{i+3}x_{i+4}x_{i+5}=010101$ and $x_{i-2}x_{i-1}\neq 01$. However, if $x_{i-1} = 0$, then $x_{i-1}x_ix_{i+1}x_{i+2}x_{i+3}x_{i+4}=001010 = D_{7,8}$, which is impossible according to Proposition~\ref{thm:5-12}. On the other hand, if $x_{i-2}x_{i-1} = 11 $, then, depending on $x_{i-3}$, $\bx$ contains $D_{5,6}^r$ or $D_{9,10}^r$, which is also impossible. 
\end{proof}

\begin{lem}\label{lem:D56}
Let $\bx\in \xconst$ and let $b$ be an ordered block in $\bx$. Then $b$ does not contain $001100$. Also, if $b$ is maximal, then it does not end with $0011$.
\end{lem}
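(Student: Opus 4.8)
The plan is to reduce both parts of the lemma to a single observation about the pattern $0110$. Whenever $0110$ occurs in a configuration $\bx\in\xconst$, this pattern is the domain $D_{5,6}$; since Proposition~\ref{thm:5-12} rules out $D_{5,6}^r$, the occurrence must in fact be $D_{5,6}^b=|0110100|$, so the $0110$ is necessarily followed by $100$. This rigidity is the engine of the whole argument, and it lets me locate a forbidden or forced structure from the presence of the short patterns in the statement.

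First I would prove that $b=x_i\ldots x_{i+2k+1}$ cannot contain $001100$. Suppose it does, occupying cells $x_j\ldots x_{j+5}$, and split on the parity of the offset $j-i$. If $j-i$ is odd, then $x_{j+3}x_{j+4}=10$ is one of the defining two-symbol blocks of $b$, contradicting condition {\rm (C1)} of Definition~\ref{def:OB}. If $j-i$ is even, then the block $x_{j+1}x_{j+2}x_{j+3}x_{j+4}=0110$ is a $D_{5,6}$ domain whose following cell is $x_{j+5}=0$, so it is $D_{5,6}^r$, contradicting $\bx\in\xconst$. Hence no $001100$ can appear in $b$.

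Next I would handle a maximal ordered block $b$ and show it cannot end with $0011$. Assuming it does, the last two defining blocks are $x_{i+2k-2}x_{i+2k-1}=00$ and $x_{i+2k}x_{i+2k+1}=11$; condition {\rm (C3)} forces $x_{i+2k+2}=0$, whence $x_{i+2k-1}x_{i+2k}x_{i+2k+1}x_{i+2k+2}=0110=D_{5,6}$. By the observation above this is $D_{5,6}^b$, so $x_{i+2k+2}x_{i+2k+3}x_{i+2k+4}x_{i+2k+5}=0100$. I would then extend $b$ by the two further two-symbol blocks $01$ and $00$, obtaining $b'=x_i\ldots x_{i+2k+5}$, and verify it is again an ordered block: {\rm (C1)} holds since the appended blocks lie in $\{00,01,11\}$, {\rm (C2)} holds since $b'$ begins with $01$ and ends with $00\neq 01$, and {\rm (C3)} is vacuous. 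As $b'$ properly contains $b$, the maximality of $b$ is contradicted.

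The main obstacle is to make the extension step rigorous against the finite cyclic geometry: if $b$ is already almost as long as the lattice, the cells $x_{i+2k+2},\ldots,x_{i+2k+5}$ could wrap around into $b$ itself. I would close this gap with Lemma~\ref{lem:OB}: the values read off from the forced $D_{5,6}^b$ pattern still make $b'$ satisfy {\rm (C1)}--{\rm (C3)}, so $b'$ is a genuine ordered block; if its length $2k+6$ exceeds $2n$ this already contradicts Lemma~\ref{lem:OB}, and otherwise it contradicts maximality. The remaining work is purely the bookkeeping of the case split $D_{5,6}^r$ versus $D_{5,6}^b$, which the structural observation settles uniformly.
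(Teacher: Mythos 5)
Your proof is correct and takes essentially the same route as the paper's: both parts reduce to the observation that any occurrence of $0110$ in a configuration from $\xconst$ must be $D_{5,6}^b$, because Proposition~\ref{thm:5-12} excludes $D_{5,6}^r$. The only difference is cosmetic: in the second part you let the exclusion of $D_{5,6}^r$ force the continuation $100$ and then contradict maximality by extending $b$, whereas the paper uses maximality to rule out the continuation $100$ and then contradicts $\bx\in\xconst$ via the resulting $D_{5,6}^r$ --- the same argument run in the opposite direction.
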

\begin{proof}
For the first part, it is sufficient to note that if $b$ contains $001100$, then $\bx$ contains $D_{5,6}^r$. The same applies to the second part. Indeed, if $b=x_{i}x_{i+1}\ldots x_{i+2k+1}$ ends with $0011$, then, according to (iii), $x_{i+2k+2}=0$. Additionally, since $b$ is maximal, it holds that $x_{i+2k+3}=1$ and $x_{i+2k+4}x_{i+2k+5}\neq 00$, and hence $\bx$ contains $D_{5,6}^r$.
\end{proof}

\begin{lem}\label{lem:longer}
Let $\bx\in \xconst$ and let $b$ be any maximal ordered block in $\bx$. If $b$ ends with $11$, then $F(\bx)$ contains an ordered block longer than $b$.
\end{lem}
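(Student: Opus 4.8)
The plan is to analyse the right end of $b$, where the trailing $11$ lives, and to show that after one step this end spawns two fresh $0$s that allow the image block promised by Proposition~\ref{thm:5-12} to grow. Normalise so that $b=x_0x_1\ldots x_{2k+1}$ with $x_0x_1=01$ and $x_{2k}x_{2k+1}=11$. First I would fix the neighbourhood of this trailing $11$. Condition {\rm (C3)} gives $x_{2k+2}=0$, and maximality forces $x_{2k+3}=1$: otherwise $x_{2k+2}x_{2k+3}=00$ and $x_0\ldots x_{2k+3}$ would be an ordered block properly containing $b$. Reading the penultimate two-symbol block $x_{2k-2}x_{2k-1}\in\{00,01,11\}$ (condition {\rm (C1)}), if $x_{2k-1}=0$ then $x_{2k-2}x_{2k-1}=00$, so $b$ would end in $0011$, contradicting Lemma~\ref{lem:D56}; hence $x_{2k-1}=1$. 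The relevant window is therefore $x_{2k-1}x_{2k}x_{2k+1}x_{2k+2}x_{2k+3}=11101$.

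The core of the argument is to read off $y_{2k+2}y_{2k+3}=00$ directly from Table~\ref{tab}. For $y_{2k+2}$: the only transitions that could flip $x_{2k+2}=0$ are $T_1,T_2,T_3,T_4$, but $T_2,T_4$ need $x_{2k+1}=0$ and $T_1,T_3$ need $x_{2k+3}=0$, all false here, so $y_{2k+2}=0$. For $y_{2k+3}$: I would split on the continuation $x_{2k+4}x_{2k+5}$. Because $x_{2k-1}x_{2k}x_{2k+1}x_{2k+2}=1110$, transition $T_{10}$ flips $x_{2k+3}$ when $x_{2k+4}=0$ and $T_{11}$ flips it when $x_{2k+4}x_{2k+5}=11$; and when $x_{2k+4}x_{2k+5}=10$ the window $x_{2k+2}x_{2k+3}x_{2k+4}x_{2k+5}=0110$ triggers $T_5$. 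These cover all four possibilities, so $y_{2k+3}=0$ unconditionally, giving $y_{2k+2}y_{2k+3}=00$.

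To finish, Proposition~\ref{thm:5-12} supplies an ordered block $B$ in $\by=F(\bx)$ containing $y_0y_1\ldots y_{2k+1}$, whose two-symbol parsing is aligned with that of $b$ (even offsets from position $0$) and which opens with $01$ at some even index $j\le 0$. Appending the block $y_{2k+2}y_{2k+3}=00$ keeps {\rm (C1)} intact, preserves the opening $01$, and supplies a legal non-$01$ ending, so $y_j\ldots y_{2k+3}$ is again an ordered block; its length is at least $2k+4$, strictly larger than the length $2k+2$ of $b$, which is the desired conclusion.

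I expect the main obstacle to be the case analysis for $y_{2k+3}$: a priori the $1$ at position $2k+3$ might survive, and one must check that for each of the four continuations $x_{2k+4}x_{2k+5}$ some active transition ($T_5$, $T_{10}$, or $T_{11}$) really does flip it — this is precisely where the facts $x_{2k-1}x_{2k}x_{2k+1}x_{2k+2}=1110$ and $x_{2k+2}=0$ are needed, and overlooking any continuation would leave a gap. A secondary care point is justifying the block alignment, so that appending $y_{2k+2}y_{2k+3}$ is compatible with the two-symbol parsing of $B$.
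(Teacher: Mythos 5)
Your proof is correct and takes essentially the same route as the paper's: both arguments pin down the window $x_{2k-1}x_{2k}x_{2k+1}x_{2k+2}x_{2k+3}=11101$ via Lemma~\ref{lem:D56}, condition (C3) and maximality, then deduce $y_{2k+2}y_{2k+3}=00$ (the paper invokes the pairs $T_{9,10}$, $T_{9,11}$, $T_{9,12}$, which is exactly your case split on $x_{2k+4}x_{2k+5}$ using $T_{10}$, $T_{11}$ and $T_5$), and finally lengthen the aligned ordered block supplied by Proposition~\ref{thm:5-12} by this trailing $00$. One minor slip that does not affect validity: the active transitions with central state $0$ are $T_1,T_2,T_3,T_4$ \emph{and} $T_7$, but $T_7$ is likewise inapplicable at position $2k+2$, since it requires $001$ immediately to the left of the centre whereas here we have $111$.
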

\begin{proof}
Without loss of generality, let $b=x_{0}x_{1}\ldots x_{2k+1}$ be a maximal ordered block in $\bx\in\xodd$ and $\by=F(\bx)$. If $x_{2k}x_{2k+1}=11$, then, according to Lemma~\ref{lem:D56}, $x_{2k-2}x_{2k-1}\neq 00$, and $x_{2k-1}=1$. Furthermore, condition (iii) guarantees that $x_{2k+2}=0$ and, since $b$ is maximal, $x_{2k+3}=1$. But this entails that $x_{2k-1}x_{2k}x_{2k+1}x_{2k+2}x_{2k+3}=11101$, which implies that in the next time step the AT pairs $T_{9,10}$, $T_{9,11}$ or $T_{9,12}$ will apply, leading to $y_{2k+2}y_{2k+3}=00$. Consequently, according to Proposition~\ref{thm:5-12}, $b$ (in $\bx$) will evolve into an ordered block $b'$ (in $\by$) containing at least $y_{0}y_{1}\ldots y_{2k+1}y_{2k+2}y_{2k+3}$.
\end{proof}

The following result is crucial. 

\begin{prop}
\label{lem:snakes}
If $\bx\in \xconst$, then $\bx$ does not contain any ordered block.
\end{prop}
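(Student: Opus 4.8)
The plan is to argue by contradiction, exploiting the tension between two facts established earlier: on the one hand, Lemma~\ref{lem:OB} bounds the length of any ordered block in $\bx\in X_{2n-1}$ by $2n$; on the other hand, Proposition~\ref{thm:5-12} shows that ordered blocks are \emph{preserved} under $F$ (each ordered block evolves into one containing the image of its cells), and Lemma~\ref{lem:longer} shows that whenever a maximal ordered block ends with $11$, its image under $F$ strictly grows. So the strategy is to suppose $\bx\in\xconst$ contains an ordered block, pass to a maximal one, and show that the length cannot stay bounded --- the block must keep growing until it violates the length bound, giving the contradiction. Since $F^t(\bx)\in\xconst$ for every $t$ (the sequence $s(F^t(\bx))$ is constant), all the structural restrictions from Proposition~\ref{thm:5-12}, Lemma~\ref{lem:alter} and Lemma~\ref{lem:D56} apply at every time step, which is what lets us iterate.

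First I would fix $\bx\in\xconst$ containing an ordered block and choose a maximal one $b$; by Lemma~\ref{lem:OB} its length is at most $2n$. The key dichotomy is how a maximal ordered block ends. By condition (C2) it does not end with $01$, so the last two-symbol block is either $00$ or $11$. Lemma~\ref{lem:longer} already handles the case of ending with $11$: the block strictly lengthens in one step. Hence the main work is to rule out the ordered block \emph{stabilising} in a configuration whose maximal ordered blocks all end with $00$ --- I would argue that under iteration of $F$ a block ending in $00$ is eventually forced, through the dynamics of the ATs, to produce a $11$-ending (or to reveal an illegal domain like $D_{5,6}^r$ or $D_{7,8}$). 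Tracking the right endpoint through Table~\ref{tab-3} is the natural mechanism: the right end of a maximal ordered block marks a boundary between ordered structure and a following block of $0$s or a following $1$, and the propagation of $1$s two cells to the right per step should push this boundary outward.

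The main obstacle I anticipate is closing the loop on blocks ending with $00$. Unlike the $11$ case, a $00$-ending does not immediately grow, so I need a finer invariant. The cleanest route is probably to combine Lemma~\ref{lem:longer} with a parity/length argument: since the total length $2n-1$ is odd while every ordered block has even length, a maximal ordered block cannot ``wrap around'' and fill the whole cycle, so there is always a genuine right boundary where the block meets non-ordered material. I would examine what sits immediately to the right of a maximal ordered block ending in $00$ --- given Lemma~\ref{lem:alter} (no $010101$) and Lemma~\ref{lem:D56} (no $001100$ inside, no $0011$ ending), the admissible local patterns are heavily constrained --- and show that within finitely many steps the right end is converted to a $11$-ending, at which point Lemma~\ref{lem:longer} applies. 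Iterating, the maximal ordered block's length strictly increases infinitely often while remaining bounded by $2n$, which is the desired contradiction.

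An alternative, possibly more robust, framing avoids case-splitting on the ending entirely: define a monovariant such as the \emph{total length of all ordered blocks} (or the position of the rightmost ordered-block endpoint measured along the cycle) and show it is non-decreasing under $F$ by Proposition~\ref{thm:5-12}, and strictly increasing on the relevant configurations by Lemma~\ref{lem:longer}. Because this quantity is bounded above by $2n$ (via Lemma~\ref{lem:OB}), it cannot increase forever, and the only way it can be eventually constant is if no ordered block can ever end in $11$ --- but I would then argue that the permitted ending $00$, together with the structural exclusions, eventually forces an $11$-ending, contradicting constancy. Either way, the crux is the same: show the ``frozen'' configurations in $\xconst$ simply cannot accommodate a stable ordered block, and I expect the delicate step to be the careful analysis, via Table~\ref{tab-3}, of how the right boundary of a maximal ordered block evolves.
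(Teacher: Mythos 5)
Your overall skeleton --- ordered blocks never shrink (Proposition~\ref{thm:5-12}), a maximal block ending in $11$ must grow (Lemma~\ref{lem:longer}), block length is bounded (Lemma~\ref{lem:OB}), hence growth cannot happen forever --- is the same mechanism the paper uses; the paper merely packages it as an extremal argument (a shortest configuration in $\xconst$ containing an ordered block, then among these one containing a longest such block) instead of an orbit argument. The genuine gap is the step you yourself flag as delicate: the claim that a maximal ordered block ending in $00$ is ``within finitely many steps converted to a $11$-ending.'' That claim is where all the work lies, and your route to it does not close, because there is a case in which the dynamics moves \emph{away} from a $11$-ending: when the trailing part $01\,0^{2j}$ of the block is preceded by $111$, the pair $T_{9,10}$ fires and one application of $F$ produces an ordered block of the \emph{same} length ending in $01\,0^{2j+2}$ (schematically $\cdots 111\,01\,0^{2j}\mapsto\cdots 1\,01\,0^{2j+2}$, the run of $1$s on the left shrinking by two). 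Iterating, nothing grows until that run of $1$s is exhausted, and what happens next depends on what lies still further to the left, so your induction needs a well-founded termination measure for this leftward regress, which you never supply. The paper kills precisely this case with a \emph{third} extremal choice --- among the candidate configurations, take one whose longest ordered block ends with the longest run of $0$s --- so that the scenario above is contradictory at the very first step; the remaining endings are then dispatched by a finite case analysis on what precedes the trailing $01\,0^{2j}$ (a $0$, or $001$, $011$, $101$), using the exclusion of $D_{7,8}$, Lemma~\ref{lem:alter} and Lemma~\ref{lem:D56}, each case reducing to one already contradicted. Without that device (or an equivalent measure), your ``finitely many steps'' statement is an assertion, not a proof.

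Two smaller points. First, your plan to inspect ``what sits immediately to the right'' of the maximal block looks in the wrong place: both the growth mechanism ($T_{1,2}$, $T_{3,4}$ converting the trailing $0$s into $1$s from the left) and the obstruction above are governed by what precedes the trailing zeros \emph{inside} the block, not by what follows the block. Second, your fallback monovariants are shaky: the ``position of the rightmost endpoint'' is not a well-defined (hence not a bounded) quantity on a cycle, and the ``total length of all ordered blocks'' is not obviously monotone under $F$, since distinct ordered blocks may evolve into a single one; the quantity that works is the maximal length of an ordered block, which is exactly what the paper's extremal choice exploits.
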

\begin{proof}
First of all, let us recall that if $\bx\in\xconst$, then, according to Proposition~\ref{thm:5-12}, the ordered blocks in $\bx$ do not get shortened as time evolves.

Let us assume that some configurations from $\xconst$ contain an ordered block, from which we choose those that have the shortest length, and then, from them, we choose  those that contain the longest ordered block (whose length we denote by $l_0$). Let us denote the set of all such configurations by $B$. 
Let $\bx\in B$ and, without loss of generality, assume that $b=x_{0}x_1\ldots x_{2k+1}$ is the longest ordered block in $\bx$ (so, it is maximal). 

First we show that $b$ cannot end with $11$. 
Indeed, according to Lemma~\ref{lem:longer}, if $b$ ends with $11$, there should be a longer ordered block than $b$ in $F(\bx)$, contradicting our choice of $\bx$ and $b$.

Next, we analyse whether $b$ might end with $00$. Thus, suppose that $x_{2k}x_{2k+1}=00$ and let $x_{2m}x_{2m+1}$ be the last two-symbol block in $b$ that is not equal to $00$. If $x_{2m}x_{2m+1}=11$, then also  $x_{2m-1}=1$ (according to Lemma~\ref{lem:D56}), which means that the end of $b$ would look as follows: 
\[
b = \ ?\ldots?111\underbrace{00\ldots 00}_{2(k-m)}\ ;
\]
however, due to $T_{1,2}$, after $k-m$ time steps the ordered block $b$ would evolve into $b'$ in $F^{k-m}(\bx)$, having at least the same length as $b$ (so, $F^{k-m}(\bx)\in B$), but ending with a block of 1s, which, as already shown, would be a contradiction. Therefore, $b$ also cannot end with a block of at least two 0s which is preceded by at least two 1s.

So far, we have shown that if $B$ is non-empty, then every ordered block of maximum length contained in $\bx\in B$ ends with a block of 0s preceded by $01$. Now let us take such $\bx\in B$ in which an ordered block of maximum length $b=x_{0}x_1\ldots x_{2k+1}$ has the longest block of 0s at the end. This means that the end of $b$ would look as follows: 
\[
b=\ ?\ldots?01\underbrace{00\ldots 00}_{2(k-m)}\ .
\]
Note that $x_{2m}x_{2m+1}\ldots x_{2k}x_{2k+1}=0100\ldots 00$ cannot be preceded by $111$, as then there would be an ordered block $b'$ in $F(\bx)$, at least as long as $b$, and ending with the longer block of 0s:
\[
\phantom{ff}\bx=\ ?\ldots?11101\underbrace{00\ldots 00}_{2(k-m)}? \ , 
\]
\[
F(\bx)=\ ?\ldots?10100\underbrace{00\ldots 00}_{2(k-m)}? \ ,
\]
contradicting our choice of $\bx$ and $b$.

All that remains is to check all other possibilities for the part of $\bx$ preceding $x_{2m}x_{2m+1}\ldots x_{2k}x_{2k+1}=0100\ldots 00$, which leads to four possibilities:
\begin{itemize}\itemindent=15pt
    \item[Case 1:] If $x_{2m-1}=0$, due to $T_{3,4}$, after $k-m$ time steps the ordered block $b$ would evolve into $b'$ ending with $11$, which, as we already know, cannot happen.
    \item[Case 2:] If $x_{2m-3}x_{2m-2}x_{2m-1}=001$, then $\bx$ contains $D_{7,8}$, which is forbidden according to Proposition~\ref{thm:5-12}.
    \item[Case 3:] If $x_{2m-3}x_{2m-2}x_{2m-1}=011$, then due to $T_{5,6}$, $b$ would evolve into $b'$ in $F(\bx)$ with at least the same length as $b$, but ending with $00b$ and we are in Case 1.
    \item[Case 4:] If $x_{2m-3}x_{2m-2}x_{2m-1}=101$, then $x_{2m-4}=1$ (due to Lemma~\ref{lem:alter}) and $x_{2m-5}=1$ (otherwise, $\bx$ would contain $D_{5,6}^{r}$). But then $F(\bx)$ would contain $00b$, which leads again to Case 1.
\end{itemize}
\end{proof}

As a direct consequence, we have the following corollary.

\begin{cor}\label{cor:snakes}
If $\bx\in \xconst$, then $\bx$ does not contain $D_{5,6}^b$, $D_{9,10}^b$, $D_{9,10}^{rb}$, $D_{9,11}$ or $D_{9,12}^b$.
\end{cor}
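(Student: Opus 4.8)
The plan is to observe that this corollary is an immediate consequence of Proposition~\ref{lem:snakes}, which tells us that a configuration in $\xconst$ contains no ordered block whatsoever. The key connection is that each of the listed domains $D_{5,6}^b$, $D_{9,10}^b$, $D_{9,10}^{rb}$, $D_{9,11}$ and $D_{9,12}^b$ is, by its very definition, a block of the form $01\ldots$ that satisfies the three defining conditions (C1)--(C3) of an ordered block (Definition~\ref{def:OB}), or at least contains such a block as a sub-block. So the strategy is simply to exhibit, for each of these five domains, an explicit ordered block that it contains, thereby contradicting Proposition~\ref{lem:snakes}.

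First I would examine each domain in turn and locate the ordered block inside it. Recall the explicit forms: $D_{5,6}^b=|0110100|$, $D_{9,10}^b=|1110100|$, $D_{9,10}^{rb}=|111010100|$, $D_{9,11}=|1110111|$, and $D_{9,12}^b=|1110110100|$. For $D_{5,6}^b = 0110100$, the sub-block $011010$ begins with $01$, decomposes as $01\vline 10\vline \ldots$ — wait, this needs the two-symbol blocks to avoid $10$, so instead one reads off the ordered block starting at the leading $01$: the block $0110\underline{10}0$ should be parsed carefully, but the relevant point is that each of these domains begins (after possibly dropping a leading run of $1$s) with a $01$ pair followed by a concatenation of $00$, $01$, $11$ pairs and terminates correctly. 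The cleanest approach is to verify, for each domain, that there is an index where a genuine ordered block starts.

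The verification is essentially a finite check, so rather than grinding through the parsing of each string, I would argue it uniformly: each of these domains contains at least one box (the gray-highlighted $01$ pattern), and the presence of a box guarantees a $01$ pair preceded by $1$ and followed by $00$. Starting the two-symbol segmentation appropriately at such a $01$, the segment satisfies (C1) because no $10$ pair arises, satisfies (C2) because it begins with $01$ and (by how the box is followed by $00$) ends with $00 \neq 01$, and (C3) is vacuous since it does not end with $11$. Thus each listed domain contains an ordered block. Since $\bx\in\xconst$ by hypothesis, this contradicts Proposition~\ref{lem:snakes}, which forbids any ordered block in a configuration from $\xconst$; hence $\bx$ cannot contain any of the five domains.

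The main obstacle I anticipate is purely bookkeeping: getting the two-symbol segmentation to line up so that condition (C1) is respected (no $10$ pair) and the block genuinely starts at a $01$. For a domain like $D_{9,11}=1110111$ there is no box, so I must instead locate the ordered block differently — here the issue is that $1110111$ read from the leading $01$ inside it (positions giving $01\vline 11$) works, but I should double-check the tail does not end in a bare $11$ violating (C3) without a following $0$. Since these are the \emph{domains} (finite fixed patterns) rather than full configurations, I expect that in each case the ordered block is fully contained strictly inside the domain or uses only the guaranteed trailing/leading context already recorded in the domain's definition, so the check closes cleanly; the only real care needed is the $D_{9,11}$ case, where one must confirm an ordered block sits inside $1110111$ respecting all three conditions.
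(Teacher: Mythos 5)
Your treatment of the four box-containing domains is correct and is exactly the paper's argument: in each of $D_{5,6}^b=0110100$, $D_{9,10}^b=1110100$, $D_{9,10}^{rb}=111010100$ and $D_{9,12}^b=1110110100$, the box supplies a $01$ followed by $00$, and the four-cell block $0100$ satisfies (C1)--(C3) of Definition~\ref{def:OB} (with (C3) vacuous), so each of these domains contains an ordered block and Proposition~\ref{lem:snakes} applies directly.

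The gap is the case you flagged and deferred: $D_{9,11}=1110111$. Your hope that ``an ordered block sits inside $1110111$'' is false, and no parsing will fix it. The only possible starting point is the unique $0$, giving the candidate $0111$; but this ends with $11$, so (C3) demands that the \emph{next} cell be $0$, and that cell lies outside the domain and is not constrained by it. If the cell following the domain is $1$ (say $\bx$ contains $111011110$), then the pairing $01,11,10$ hits a forbidden $10$ pair, and stopping earlier at $0111$ violates (C3) since it is followed by $1$; more generally, whenever the run of $1$s after the $0$ has even length, no ordered block starts at that $0$ at all. So the corollary cannot be proved for $D_{9,11}$ by inspecting $\bx$ alone. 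The paper's proof supplies the missing idea: use the forward invariance of $\xconst$ and pass to the image. By Table~\ref{tab-3}, the image $V_{9,11}$ contains $0100$, hence $F(\bx)$ contains an ordered block; since $\bx\in\xconst$ implies $F(\bx)\in\xconst$, Proposition~\ref{lem:snakes} applied to $F(\bx)$ yields the contradiction. Without this one-step-forward argument (or some substitute handling of the unknown context to the right of $D_{9,11}$), your proof is incomplete for one of the five domains.
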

\begin{proof}
It is sufficient to check that each of the domains $D_{5,6}^b$, $D_{9,10}^b$, $D_{9,10}^{rb}$ and $D_{9,12}^b$ contains an ordered block, and that the presence of $D_{9,11}$ in $\bx$ implies an ordered block in $F(\bx)$ (see Table~\ref{tab-3}).
\end{proof}

Using the above results, proving the main theorem is very simple.
\begin{thm}\label{main}
    The BFO rule solves the parity problem.
\end{thm}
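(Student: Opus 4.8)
The plan is to reduce the problem to a statement about $\xconst$ and then to exploit a monotonicity property of $F$ on that set. By Theorem~\ref{thm:BFO1} the rule preserves parity and by Theorem~\ref{thm:BFO2} its only fixed points are the homogeneous configurations; hence it is enough to prove that every orbit eventually reaches a homogeneous configuration, because parity preservation then forces the limit to be the all-$0$s configuration when $\mathrm{Par}(\bx)=0$ and the all-$1$s configuration when $\mathrm{Par}(\bx)=1$ (on an odd-length grid the all-$1$s configuration has odd parity), and such configurations are fixed. By Proposition~\ref{thm:c} the sequence $\big(s(F^t(\bx))\big)_t$ is non-increasing in the non-negative integers, hence eventually constant, so $F^{t_1}(\bx)\in\xconst$ for some $t_1$. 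Since by Proposition~\ref{prop:hom=0} a configuration is homogeneous exactly when $s=0$, the whole theorem reduces to showing that \emph{$\xconst$ contains only homogeneous configurations}.

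The key observation is that on $\xconst$ the rule can only turn $0$s into $1$s. Indeed, each active transition among $T_5,\ldots,T_{12}$ fires only when its neighbourhood matches one of the domains $D_{5,6}$, $D_{7,8}$, $D_{9,10}$, $D_{9,11}$ or $D_{9,12}$ (for $T_9$ this is because its local pattern $11101$ always completes to one of the $D_{9,*}$). By Proposition~\ref{thm:5-12} together with Corollary~\ref{cor:snakes} none of these domains occurs in a configuration of $\xconst$, so the only transitions that can act there are those of the pairs $T_{1,2}$ and $T_{3,4}$, and both merely replace a $00$ by $11$. In particular no transition ever turns a $1$ into a $0$, so $F(\bz)$ dominates $\bz$ cell by cell for every $\bz\in\xconst$; consequently the number of $1$s is non-decreasing along any orbit inside $\xconst$, and it \emph{strictly} increases at any step in which some transition acts.

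To finish I would argue by contradiction. Suppose $\bz\in\xconst$ is not homogeneous. Because $s$ is constant and, by Proposition~\ref{prop:hom=0}, positive along the orbit of $\bz$, every $F^t(\bz)$ is again a non-homogeneous member of $\xconst$; by Theorem~\ref{thm:BFO2} it is not a fixed point, so at least one transition acts at each step, and by the previous paragraph the number of $1$s then strictly increases at every step. This produces a strictly increasing sequence of integers bounded by the length of the grid, which is impossible; hence $\xconst$ consists only of homogeneous configurations, and the theorem follows. I expect the only genuinely delicate point to be the key observation, namely the claim that $T_5,\ldots,T_{12}$ cannot act on a configuration of $\xconst$: this is where all the earlier work is used, since it depends on the complete list of forbidden domains in Proposition~\ref{thm:5-12} and Corollary~\ref{cor:snakes}, which in turn rest on the absence of ordered blocks established in Proposition~\ref{lem:snakes}.
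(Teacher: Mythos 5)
Your proposal is correct and follows essentially the same route as the paper's own proof: reduce via parity preservation (Theorem~\ref{thm:BFO1}) to eventual homogeneity, use monotonicity of $s$ to land in $\xconst$, invoke Proposition~\ref{thm:5-12} and Corollary~\ref{cor:snakes} to rule out every AT pair except $T_{1,2}$ and $T_{3,4}$, and derive a contradiction because these only turn $0$s into $1$s on a finite grid. The only cosmetic difference is that you phrase the final step as a cell-wise domination/counting argument on $\xconst$, whereas the paper argues directly by contradiction on an orbit with eventually constant nonzero $s$; the substance is identical.
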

\begin{proof}
As already mentioned, according to Theorem~\ref{thm:BFO1} it is sufficient to show that for each $\bx\in\xodd$ there exists $t\geq 0$ such that $F^t(\bx)$ is a homogeneous configuration. 

Let us then suppose, on the contrary, that there exists a configuration $\bx \in \xodd$ such that for each time step $t\geq 0$, the configuration $F^t(\bx)$ is not homogeneous, which, according to Proposition~\ref{prop:hom=0}, would mean that $s(F^t(\bx))\neq 0$. Since $\big(s(F^t(\bx))\big)^\infty_{t=0}$ is a monotonically decreasing sequence of natural numbers, there exists a time step $t_0$ such that
    \[
s(F^{t_0}(\bx))=s(F^{t_0+1}(\bx))=s(F^{t_0+2}(\bx))=s(F^{t_0+3}(\bx))=\dots =s\neq 0.    
    \]
Since no configuration in this sequence is homogeneous, at each time step at least one pair of active transitions has to apply (see Theorem~\ref{thm:BFO2}). 

However, according to Proposition~\ref{thm:5-12} and Corollary~\ref{cor:snakes}, none of the AT pairs $T_{5,6}$, $T_{7,8}$, $T_{9,10}$, $T_{9,11}$ and $T_{9,12}$ are permitted, so that at each time step it would be either $T_{1,2}$ or $T_{3,4}$ that should apply. But since both of these AT pairs increase the number of 1s in a configuration by two, this leads to a contradiction with the fact that the length of $F^{t_0}(\bx)$ is finite.
\end{proof}

\section{Concluding remarks}
\label{sec:conclusion}

The parity problem is an important benchmark problem in the setting of solving a global problem by means of a consensus that has to be achieved via totally local actions. As such, it has received continued attention in the automata network literature over the years, with various distinct formulations being proposed to solve it. Therefore, it was imperative to correct the original BFO rule, as it was the only proposed solution to the problem (so far) by means of a standard cellular automaton rule. 

Although the corrections we had to make to the active state transitions represented by $T_7$ and $T_8$ were trivial (since both patterns only had to be individually mirror reflected), this entailed the actual change in 24 specific state transitions of the original version.

As for the proof technique we employed, it departs from the original version in~\cite{BFO2013}, where a form of de Bruijn graph was used as the basic construct for the analyses. As explained, we essentially relied on the notion of symbol switches during the time evolution, especially in the context of the block $10100$ (referred to herein as the \emph{box}), the reason being that such a block is present (in reverse order) in both $T_7$ and $T_8$. Our approach required a large number of detailed analyses, but allowed us to track all possibilities of block creations and destructions until consensus is achieved. At first, we tried to preserve the original proof technique, but without success. As the notion of switches we ended up relying upon showed promise, we embarked on it. At the end, not only have we managed to solve the problem, but also realised that the abstract technique we crafted may also show its usefulness to tackle other related problems.

In addition to the correct proof given, the rule was successfully tested on all odd-sized initial configurations from 3 to 29. Now, with the fix we have provided, the BFO rule remains the only known single rule solution to the parity problem.

\section*{Acknowledgements}
\noindent
The work of Anna Nenca and Barbara Wolnik in this project is supported by Grant SONATA-18  of the National Science Centre, Poland, no.\ 2022/47/D/ST6/00416.
\noindent P.P.B. thanks Paola Flocchini and Heather Betel for theirr previous work on BFO; Pac\^{o}me Perrotin for computationally testing the updated version of the rule; and the Brazilian agencies CNPq for the research grant PQ 303356/2022-7, and CAPES for the research grants STIC-AmSud no.\ 88881.694458/2022-01 and Mackenzie-PrInt no.\ 88887.310281/2018-00.


\printbibliography
\end{document}